\documentclass{conm-p-l}
\usepackage{verbatim}
\usepackage{amssymb}
\usepackage{amsbsy}
\usepackage{amscd}
\usepackage{amsmath}
\usepackage{amsthm}
\usepackage{amsxtra}
\usepackage{latexsym}
\usepackage[mathscr]{eucal}

\newcommand{\FFcenter}{\mf{z}(\affg)}

\newcommand{\wSl}{\tilde{\mathcal{S}}}
\newcommand{\D}{\mc{D}}
\newcommand{\betagamma}[1]{\mc{D}^{\ch}(\C^{#1})}
\newcommand{\resV}{V_{\res}(\fing)}
\newcommand{\resW}{\W_{\res}(\fing,f)}
\newcommand{\resWa}{M_{\fing}}
\newcommand{\criV}[1]{V^{\cri}_{#1}(\fing)}
\newcommand{\criW}[1]{\W^{\cri}_{#1}(\fing,f)}
\newcommand{\cri}{\on{cri}}

\newcommand{\M}[1]{\mathbf{M}_{#1}}
\newcommand{\Wak}[1]{\mathbf{F}_{#1}}

\newcommand{\Sl}{\mathbb{S}}

\newcommand{\wt}{\widetilde}

\newcommand{\Nil}{\mc{N}}

\newcommand{\BRS}[2]{H^{\semiinf+#1}_{f}(#2)}
\newcommand{\bw}[1]{\bigwedge\nolimits^{#1}}

\newcommand{\wh}{\widehat}

\newcommand{\mc}{\mathcal}
\newcommand{\mf}{\mathfrak}

\newcommand{\on}{\operatorname}
\newcommand{\affP}{\widehat{P}_k}

\newcommand{\Vg}[1]{V^{#1}(\fing)}


\newcommand{\finn}{\mathfrak{n}}

\newcommand{\isomap}{{\;\stackrel{_\sim}{\to}\;}}

\newcommand{\W}{\mathscr{W}}

\newcommand{\nc}{\newcommand}
\nc{\Hp}[1]{H^{#1}}
\newcommand{\prin}{\mathrm{prin}}

\newcommand{\h}{\mathfrak{h}}

\newcommand{\affh}{\widehat{\mathfrak{h}}}

\newcommand{\affg}{\widehat{\mathfrak{g}}}
\newcommand{\bigaffg}{\widetilde{\mathfrak{g}}}
\newcommand{\fing}{\mathfrak{g}}
\newcommand{\finh}{\mathfrak{h}}
\newcommand{\finm}{\mathfrak{m}}

\newcommand{\Wg}[1]{\W^{#1}(\fing, f)}


\newcommand{\N}{\mathbb{N}}
\newcommand{\Q}{\mathbb{Q}}
\newcommand{\1}{{\mathbf{1}}}

\newcommand{\dual}[1]{{#1}^*}
\newcommand{\bra}{{\langle}}
\newcommand{\ket}{{\rangle}}

\newcommand{\Lam}{\Lambda}

\newcommand{\lam}{\lambda}
\newcommand{\ra}{\rightarrow}
\newcommand{\+}{\mathop{\oplus}}
\newcommand{\Z}{\mathbb{Z}}

\newcommand{\inv}{^{-1}}

\renewcommand{\*}{{\otimes}}
\newcommand{\C}{\mathbb{C}}
\newcommand{\che}{^{\vee}}

\theoremstyle{plain}
\newtheorem{Th}{Theorem}[section]

\newtheorem{Pro}[Th]{Proposition}

\theoremstyle{definition}

\theoremstyle{remark}

\newtheorem{Rem}[Th]{Remark}

\newcommand{\semiinf}{\frac{\infty}{2}}
\DeclareMathOperator{\res}{res}

\DeclareMathOperator{\Zhu}{Zhu}

\DeclareMathOperator{\rank}{rk}
\DeclareMathOperator{\ch}{ch}

\DeclareMathOperator{\gr}{gr}

\DeclareMathOperator{\ad}{ad}
\DeclareMathOperator{\Ad}{Ad}

\DeclareMathOperator{\Spec}{Spec}

\title{ W-algebras at the critical level
}
\author{Tomoyuki Arakawa}
\address{Research Institute for Mathematical Sciences, Kyoto University,
 Kyoto 606-8502 JAPAN}

\email{arakawa@kurims.kyoto-u.ac.jp}

\thanks{This work is partially  supported 
by the JSPS Grant-in-Aid  for Scientific Research (B)
No.\ 20340007
and the JSPS Grant-in-Aid for challenging Exploratory Research
No.\ 23654006}

\subjclass[2000]{14B69, 17B68, 17B67}


\begin{document}
\maketitle

\begin{abstract}
Let $\fing$ be a complex simple Lie algebra,
$f$ a nilpotent element of $\fing$.
We show that 
(1) the center 
of the   $W$-algebra 
$\Wg{\cri}$
associated $(\fing,f)$
 at the critical  level
coincides with the Feigin-Frenkel center of $\affg$,
(2) 
the 
centerless quotient
$\W_\chi(\fing,f)$
 of $\W^{\cri}(\fing,f)$ 
corresponding
to  an ${}^L\fing$-oper $\chi$
on the disc
is simple,
and (3)
the simple quotient  $\W_\chi(\fing,f)$
is a  quantization
of the jet scheme of the intersection of the Slodowy slice 
at $f$ with the
 nilpotent cone of $\fing$.
\end{abstract}

\section{Introduction}
Let $\fing$ be a complex simple Lie algebra,
$f$ a nilpotent element of $\fing$,
$U(\fing,f)$
the finite $W$-algebra \cite{Pre02}
associated with $(\fing,f)$.
In \cite{Pre07}
it was shown that
the center of $U(\fing,f)$ coincides with 
the center $\mc{Z}(\fing)$ of the universal enveloping algebra
$U(\fing)$ of $\fing$
(Premet attributes the proof to Ginzburg).

Let $\W^k(\fing,f)$
 be the 
{\em (affine) $W$-algebra}
 \cite{FF90,KacRoaWak03,KacWak04}
 associated with $(\fing,f)$ at level $k\in \C$.
One may \cite{Ara07,De-Kac06} regard 
$\W^k(\fing,f)$ as a
one-parameter  {\em chiralization} of 
$U(\fing,f)$.
Hence
it is natural to ask 
whether the analogous identity holds for the
center 
$Z(\W^k(\fing,f))$
of $\W^{k}(\fing,f)$,
which is a commutative vertex subalgebra of 
$\W^{k}(\fing,f)$.

Let $\Vg{k}$
be the universal affine vertex algebra associated with $\fing$
at level $k$,
$Z(\Vg{k})$ the center of $\Vg{k}$.
The embedding
$Z(V^{k}(\fing))
\hookrightarrow V^{k}(\fing)$
induces    the vertex algebra homomorphism
\begin{align*}
Z(V^{k}(\fing))\ra Z(\W^{k}(\fing,f))
\end{align*}
for any $k\in \C$.
However, 
both 
$Z(V^{k}(\fing))$
and 
$Z(\W^{k}(\fing,f))
$
are trivial
unless
$k$ is the critical level 
\begin{align*}
\cri:=-h\che,
\end{align*}
where $h\che$ is the dual Coxeter number of $\fing$.
Therefore the question one should ask is that 
whether
the center 
$Z(\W^{\cri}(\fing,f))
$ of the $W$-algebra 
at the critical level 
coincides with
the {\em Feigin-Frenkel center}
\cite{FeiFre92,Fre05}
$\mf{z}(\affg):=Z(V^{\cri}(\fing))$,
which can be naturally considered as 
the space of functions on the space
of $\on{Op}_{{}^L\fing}^{\on{reg}}$
of ${}^L\fing$-opers on the disc.
Here ${}^L\fing$ is the Langlands dual Lie algebra of $\fing$.

\begin{Th}\label{MainTh}
 The embedding
$\FFcenter
\hookrightarrow V^{\cri}(\fing)$
induces the isomorphism
\begin{align*}
\FFcenter\isomap  Z(\W^{\cri}(\fing,f)).
\end{align*}
Moreover, 
$\Wg{\cri}$ is free over $\FFcenter$,
where $\FFcenter$ is regarded as a commutative ring
with the $(-1)$-product.
\end{Th}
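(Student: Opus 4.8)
The plan is to reduce the vertex-algebra statement to associated graded objects, and then to a computation with the classical (commutative) geometry of the Slodowy slice.

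First I would recall the standard BRST/cohomological construction of $\Wg{k}$ as $H^{\semiinf/2+0}_{f}(\Vg{k})$, and equip $\Vg{k}$ with its canonical filtration whose associated graded is the arc space $\C[J_\infty(\fing^*)]$; the $W$-algebra inherits a filtration with $\gr \Wg{k} \cong \C[J_\infty(\Sl_f)]$, where $\Sl_f$ is the Slodowy slice at $f$. Under this identification the image of the Feigin-Frenkel center $\FFcenter$ has associated graded the subalgebra of invariants coming from the symmetric invariants of $\fing$, i.e. the arc space of $\fing^*\ra \fing^*/\!/G$ pulled back to the slice. The key point is that the restriction map $\C[\fing^*/\!/G]\ra \C[\Sl_f]$ is injective and makes $\C[\Sl_f]$ free as a module over $\C[\fing^*/\!/G]$ --- this is the classical Kostant-type statement that $\Sl_f \ra \fing^*/\!/G$ is faithfully flat with the Kostant section, and it passes to arc spaces since the arc functor preserves smoothness and (by a standard flatness-over-a-polynomial-ring argument) freeness. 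I would state this as a lemma and attribute the finite-dimensional input to Kostant/Premet.

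Next, the heart of the argument is to identify $Z(\Wg{\cri})$ with the image of $\FFcenter$ and not something strictly larger. I would argue that $\gr Z(\Wg{\cri})$ is contained in the Poisson center of $\gr \Wg{\cri} = \C[J_\infty(\Sl_f)]$, and then compute that Poisson center. For a generic slice the Poisson structure on $\C[\Sl_f]$ is the restriction of the Kirillov-Kostant bracket, and its center (for the arc space, with the induced vertex Poisson structure) is exactly $\C[J_\infty(\fing^*/\!/G)]$ --- the arc space of the Hamiltonian reduction has Poisson center equal to the arc space of the base of the adjoint quotient. Combined with the freeness lemma, a filtered-module comparison then forces $\FFcenter\ra Z(\Wg{\cri})$ to be an isomorphism: it is injective because it is injective on associated graded, and surjective because any new central element would produce, on $\gr$, a Poisson-central element outside $\C[J_\infty(\fing^*/\!/G)]$, a contradiction. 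The freeness of $\Wg{\cri}$ over $\FFcenter$ with the $(-1)$-product follows from the freeness on $\gr$ by the usual lifting argument for filtered modules over a filtered ring.

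The main obstacle I expect is the precise computation of the Poisson center of the vertex Poisson algebra $\C[J_\infty(\Sl_f)]$, i.e. showing it is no bigger than $\C[J_\infty(\fing^*/\!/G)]$. The finite-dimensional analogue (Poisson center of $\C[\Sl_f]$ equals $\C[\fing^*/\!/G]$) is essentially Premet's result, but promoting it to the arc space requires care: one must control the vertex Poisson bracket, which mixes derivatives, and rule out ``higher'' central elements that are not pulled back from finite-dimensional ones. I would handle this by a weight/degree argument using the conformal grading on $\Wg{\cri}$ together with the fact that the center of a vertex algebra is preserved under the action of $T=L_{-1}$, reducing to checking centrality of the finitely many generating series $T^n \mathbf{b}$ for $\mathbf{b}$ a finite generator of $\FFcenter$; centrality of these is then a direct consequence of $\FFcenter$ being central in $\Vg{\cri}$ plus functoriality of the BRST reduction. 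A secondary technical point is verifying that the BRST cohomology is concentrated in degree $0$ and that the filtration is well-behaved (exhaustive, separated, compatible), so that all the $\gr$ arguments are legitimate; this is standard for nilpotent $f$ but should be stated carefully.
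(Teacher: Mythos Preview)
Your injectivity argument (via associated graded) matches the paper's Proposition~\ref{Pro:injectivity}, and your freeness argument---Kostant's freeness of $\C[\Sl]$ over $\C[\fing^*]^G$ passed to arc spaces and then lifted through the Li filtration---is a reasonable alternative to the paper's route, which instead transports the Frenkel--Gaitsgory freeness of $\Vg{\cri}$ over $\FFcenter$ through a spectral sequence using BRST vanishing for $\resV$.

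The surjectivity step, however, has a genuine gap. You correctly isolate the crux as showing that the vertex-Poisson center of $\C[\Sl_{\infty}]$ is no larger than $\C[(\fing^*/\!/G)_{\infty}]$. But the method you propose---``reducing to checking centrality of the finitely many generating series $T^n\mathbf{b}$ for $\mathbf{b}$ a finite generator of $\FFcenter$''---argues in the wrong direction: it only shows these elements \emph{lie in} the center, which is the easy inclusion already given by functoriality of BRST. What is needed is that \emph{nothing else} is Poisson-central, and neither $T$-stability nor the conformal grading gives an upper bound on the center. The claim that the vertex-Poisson center of $\C[\Sl_{\infty}]$ equals $\C[(\fing^*/\!/G)_{\infty}]$ is true, but it is a real theorem (morally, the arc-space version of the fact that the generic symplectic leaves of $\Sl$ are the regular fibers of $\Sl\to\fing^*/\!/G$), and you have not supplied a proof of it.

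The paper sidesteps this Poisson-center computation entirely. Instead of bounding $Z(\gr\Wg{\cri})$, it proves (Theorem~\ref{Th: main2} for $\chi=\chi_0$) that the quotient $\resW=\Wg{\cri}/\FFcenter^*\cdot\Wg{\cri}$ is a \emph{simple} vertex algebra; since any central element of $\Wg{\cri}$ outside $\FFcenter$ would survive nontrivially in $\resW$ and generate a proper ideal there, simplicity immediately forces $Z(\Wg{\cri})=\FFcenter$. The simplicity itself is obtained by representation-theoretic means: one embeds $\resW$ into $\BRS{0}{\resWa}$ for the restricted Wakimoto module $\resWa$, identifies $\resWa$ with the contragredient of a restricted Verma module, and shows the latter's BRST cohomology is cocyclic with the image of the vacuum as cocyclic vector. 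This argument, resting on Wakimoto modules and the Slodowy-variety geometry, is of a quite different nature from your proposed direct Poisson computation.
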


Theorem \ref{MainTh} generalizes 
a result of Feigin and Frenkel \cite{FeiFre92},
who proved  that
$\FFcenter\cong   \W^{\cri}(\fing,f_{\prin})$
for a principal nilpotent element $f_{\prin}$ of $\fing$.
It also generalizes 
a result of 
Frenkel and Gaitsgory \cite{FreGai04},
who proved the freeness of 
$\Vg{\cri}$ over $\FFcenter$.

\smallskip

Let $G$ be the adjoint group of $\fing$,
 $\Sl$  the Slodowy slice at $f$ to $\Ad G.f$,
$\mc{N}$ the nilpotent cone of $\fing$.
Set
\begin{align*}
 \mc{S}=\Sl\cap \Nil.
\end{align*}
It is known  \cite{Pre02}
that the scheme
$\mc{S}$ is reduced, irreducible,
 and  normal
complete intersection
of dimension $\dim \mc{N}-\dim \Ad G.f$.

For  $\chi\in \on{Op}_{{}^L\fing}^{\on{reg}}$,
let 
  $\criW{\chi}$
be the quotient of 
$\W^{\cri}(\fing,f)$ 
by the ideal generated by 
$z-\chi(z)$ with $ z\in \FFcenter$.
Then any simple quotient of $\Wg{\cri}$ 
is a quotient of
$\criW{\chi}$ for some $\chi$.
\begin{Th}\label{Th: main2}
For  $\chi\in \on{Op}_{{}^L\fing}^{\on{reg}}$,
the vertex algebra 
$\W_{\chi}(\fing,f)$ is simple.
Its associated graded vertex Poisson algebra
$\gr \W_{\chi}(\fing,f)$ is isomorphic to
$\C[\mc{S}_{\infty}]$
as vertex Poisson algebras,
where
 $\mc{S}_{\infty}$ is the infinite jet scheme of $\mc{S}$
and $\C[\mc{S}_{\infty}]$ is equipped with the level $0$
vertex Poisson algebra structure.
\end{Th}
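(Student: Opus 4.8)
The plan is to deduce Theorem~\ref{Th: main2} from Theorem~\ref{MainTh} together with the structure of the Slodowy slice intersection $\mc{S}$ and the vanishing properties of the BRST/quantized Drinfeld--Sokolov reduction functor. I would work at the level of the PBW/Li filtration on the $W$-algebra, for which $\gr\Wg{\cri}\cong\C[\Sl_\infty]$ as a vertex Poisson algebra (this is the standard identification of the associated graded of $\Wg{k}$ with the arc space of the Slodowy slice, valid at any level including the critical one, since the filtered pieces do not depend on $k$).

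\emph{Step 1: identify the ideal.} By Theorem~\ref{MainTh}, $\FFcenter\isomap Z(\Wg{\cri})$ and $\Wg{\cri}$ is free as a module over the commutative ring $(\FFcenter,{}_{(-1)})$. Hence passing to the quotient $\criW{\chi}$ by the ideal generated by $z-\chi(z)$, $z\in\FFcenter$, is the ``fibre at $\chi$'' of a free module, so $\criW{\chi}\ne 0$ and, crucially, the PBW filtration on $\Wg{\cri}$ descends to one on $\criW{\chi}$ whose associated graded is the quotient of $\C[\Sl_\infty]$ by the symbols of $z-\chi(z)$. The symbol of the Feigin--Frenkel center inside $\gr\Wg{\cri}\cong\C[\Sl_\infty]$ is, by the Feigin--Frenkel description of $\FFcenter\cong\W^{\cri}(\fing,f_{\prin})$ and functoriality of the reduction, exactly the ideal of functions cutting out the arc space of the invariant-polynomial map $\Sl\to\Sl/\!/G\cong\fing/\!/G$; equivalently it is generated by the arcs of the $\rank\fing$ Casimir generators. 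Setting these equal to the constants $\chi$ cuts the nilpotent cone out of $\Sl$: $\Spec$ of the quotient is $\mc{S}_\infty$. So $\gr\criW{\chi}\twoheadrightarrow\C[\mc{S}_\infty]$, and one must check this surjection is an isomorphism. This is where freeness over $\FFcenter$ is essential: it forces $\Wg{\cri}\cong\FFcenter\otimes_{\C}\criW{\chi}$ \emph{as filtered vector spaces} (the fibre functor is exact on free modules), whence $\C[\Sl_\infty]\cong\gr\Wg{\cri}\cong\gr\FFcenter\otimes\gr\criW{\chi}$, and comparing with $\C[\Sl_\infty]\cong\C[(\Sl/\!/G)_\infty]\otimes_{?}\C[\mc{S}_\infty]$ via the complete-intersection/flatness statement for $\mc{S}\hookrightarrow\Sl$ (Premet: $\mc{S}$ is a reduced irreducible normal complete intersection, so $\Sl\to\Sl/\!/G$ is flat with fibre $\mc{S}$, and flatness is inherited by arc spaces) pins down $\gr\criW{\chi}=\C[\mc{S}_\infty]$ on Hilbert-series grounds.

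\emph{Step 2: simplicity.} Given that $\gr\criW{\chi}\cong\C[\mc{S}_\infty]$ as vertex Poisson algebras, I would argue that $\criW{\chi}$ is already simple, so that $\W_\chi(\fing,f)=\criW{\chi}$. Suppose $I\subset\criW{\chi}$ is a nonzero proper vertex-algebra ideal; then $\gr I$ is a nonzero proper vertex Poisson ideal of $\C[\mc{S}_\infty]$, so its zero locus is a nonempty proper closed subscheme of $\mc{S}_\infty$ stable under the derivation $T$ (the translation operator) and under all the Poisson brackets. Because $\mc{S}$ is irreducible and the arc space $\mc{S}_\infty$ of an irreducible variety is irreducible (Kolchin), and because the $T$-stable, bracket-stable subschemes correspond to honest $G$-type symmetries that already act transitively on the relevant stratum (here the key input is that $\mc{S}$ carries a contracting $\C^\times$-action with a unique fixed point, so any nonzero $T$-stable graded ideal must contain the augmentation ideal), one gets a contradiction. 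More carefully: the good grading/contracting action makes $\C[\mc{S}_\infty]$ a graded vertex Poisson algebra with one-dimensional degree-zero part, and a graded ideal that is proper and nonzero would have to be contained in the augmentation ideal yet be $T$-stable and bracket-stable; running the PBW generators of $\Wg{\cri}$ (the images of a basis of $\fing^f$, which generate $\gr$ as a differential algebra) through the Poisson bracket shows any such ideal is $0$. Hence $\criW{\chi}$ is simple, and as any simple quotient of $\Wg{\cri}$ is a quotient of some $\criW{\chi}$, the second assertion follows, with $\gr\W_\chi(\fing,f)\cong\C[\mc{S}_\infty]$.

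\emph{Main obstacle.} The delicate point is Step~1's passage from the surjection $\gr\criW{\chi}\twoheadrightarrow\C[\mc{S}_\infty]$ to an isomorphism: a priori the PBW filtration on the \emph{quotient} could be coarser than the quotient filtration, so that $\gr\criW{\chi}$ is only a subquotient. Resolving this is exactly what the freeness half of Theorem~\ref{MainTh} buys us --- tensoring a free $\FFcenter$-module over the residue ``field'' $\C_\chi$ is exact and compatible with the filtration --- so the argument hinges on checking that the good/PBW filtration is compatible with the $\FFcenter$-module structure (i.e. that $\FFcenter\hookrightarrow\Wg{\cri}$ is a filtered embedding with $\gr\FFcenter$ a polynomial subalgebra of $\C[\Sl_\infty]$ on the Casimir arcs). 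Once that compatibility is in hand the Hilbert-series comparison closes the gap; verifying it rigorously, rather than the formal deduction, is where the real work lies.
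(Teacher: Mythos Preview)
Your Step~1 is essentially correct and matches the paper's Propositions~\ref{Pro:description-of-restricted-W} and~\ref{Pro:conforma-weight-filtartion}, though note that in the paper's logical order the center identification (the first half of Theorem~\ref{MainTh}) is \emph{deduced from} Theorem~\ref{Th: main2}, not the other way around; what you actually need from Theorem~\ref{MainTh} is only the freeness assertion, which is proved independently (Proposition~\ref{Pro:description-of-restricted-W}(ii)), so no circularity arises.

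Step~2, however, has a genuine gap: the vertex Poisson algebra $\C[\mc{S}_\infty]$ is \emph{not} simple, so you cannot infer simplicity of $\criW{\chi}$ from that of its associated graded. Concretely, $\{f\}$ is a zero-dimensional symplectic leaf of $\Sl$ (the slice meets the orbit $\Ad G.f$ transversally in the single point $f$), hence of $\mc{S}$; therefore the maximal ideal of $\C[\mc{S}]$ at $f$ is a Poisson ideal, and the maximal ideal of $\C[\mc{S}_\infty]$ at the constant arc $f$ is a nonzero proper $T$-stable Poisson ideal, i.e.\ a vertex Poisson ideal. Your contracting-$\C^\times$ argument does not exclude this: the augmentation ideal is precisely the one graded ideal that the contracting action singles out, and nothing in the bracket structure kills it. (For $f=0$ the same phenomenon is visible more dramatically: arc spaces of closures of smaller nilpotent orbits give many vertex Poisson ideals of $\C[\mc{N}_\infty]$.) So simplicity of $\criW{\chi}$ is a genuinely quantum statement that cannot be read off from $\gr$.

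The paper's proof of simplicity is entirely different and representation-theoretic. For $\chi=\chi_0$ it identifies $\resWa\cong(\M{\cri\Lam_0}^{\res})^*$ (Proposition~\ref{Pro:Wakimoto is Verma}), shows $\omega_{\W}:\resW\hookrightarrow\BRS{0}{\resWa}$ is injective (Proposition~\ref{Pro:injective restricted}, via the Springer/Slodowy geometry on the associated graded), and proves $\BRS{0}{(\M{\cri\Lam_0}^{\res})^*}$ is cocyclic with cocyclic vector $v_{\cri\Lam_0}^*$ (Proposition~\ref{Pro cocyclisity}). Since the vacuum $\1$ maps to this cocyclic vector, $\resW$ is cocyclic with cocyclic vector $\1$, hence simple. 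For general $\chi$ one uses the conformal weight filtration: $\gr^E\criW{\chi}\cong\resW$ as $\Wg{\cri}$-modules (see~\eqref{eq:conformal-filtration--}), and simplicity of the associated graded \emph{module} (not vertex Poisson algebra) implies simplicity of $\criW{\chi}$.
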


Theorem \ref{Th: main2} generalizes  a result of 
Frenkel and Gaitsgory \cite{FreGai04},
who proved the simplicity
of
 the quotient of $\Vg{\cri}$
by the ideal generated by
$z-\chi(z)$ for $z\in \mf{z}(\affg)$.

In the case that $f=f_{\prin}$
we have
$\W_{\chi}(\fing,f_{\prin})=\C$ \cite{FeiFre92},
while
$\mc{S}$
is a point, and so is $\mc{S}_{\infty}$.
Theorem \ref{Th: main2}
 implies that
this is the only case that
$\W^{\cri}(\fing,f)$ admits
finite-dimensional
quotients.

\smallskip

In general
little  is known about the 
representations of $\Wg{\cri}$.
We have shown
in   \cite{Ara08-a} 
that
at least in type $A$
the representation theory of $\Wg{k}$ is controlled
by that of $\affg$ at level $k$
for any $k\in \C$.
Therefore 
the Feigin-Frenkel conjecture
(see  \cite{AraFie08})
implies
 that,
at least in type $A$,
the 
character of irreducible 
highest weight representations
of $\Wg{\cri}$
should be expressed in terms of
Lusztig's periodic polynomial \cite{Lus80}.
We plan to  return to this in future work.

\section{Associated graded vertex Poisson algebras}

For a vertex algebra $V$,
let $\{F^p V\}$ be the Li filtration \cite{Li05},
\begin{align*}
\gr V=\bigoplus_{p} F^p V/F^{p+1}V
\end{align*}
the associated graded vertex
Poisson  algebra.
The vertex Poisson algebra structure of $\gr V$
restricts to the Poisson algebra structure 
on  {\em Zhu's Poisson algebra} \cite{Zhu96}
\begin{align*}
R_V:=V/F^1 V\subset \gr V.
\end{align*}
Moreover there is a surjective map
\begin{align}
(R_V)_{\infty}\ra \gr V
\label{eq:homo}
\end{align}
of vertex Poisson algebras \cite{Li05,Ara09a}.
Here
$X_V=\Spec R_V$,
$(R_V)_{\infty}=\C[(X_V)_{\infty}]$,
where 
 $X_{\infty}$ denotes the infinite jet scheme of a
scheme $X$ of finite type,
and $(R_V)_{\infty}$ is equipped with the level zero vertex Poisson algebra
structure \cite[2.3]{Ara09a}.

\smallskip

Let $\betagamma{r}$
be the
 {\em  $\beta \gamma$-system}
 of rank $r$,
that is,
 the vertex algebra
generated by fields
$a_1(z),\dots,a_r(z)$,
$a_1^*(z),\dots,a_r^*(z)$,
satisfying the following OPE's:
\begin{align*}
 a_i(z)a_j(z)^*\sim \frac{\delta_{ij}}{z-w},
\quad
 a_i(z)a_j(z)\sim  a_i^*(z)a_j^*(z)\sim 0.
\end{align*}
It is straightforward to see that
$ R_{\betagamma{r}}
\cong \C[T^* \C^r]$ as 
 Poisson algebras
 and that
(\ref{eq:homo}) gives the isomorphism
\begin{align}
 (R_{\betagamma{r}})_{\infty}\isomap \gr \betagamma{r}.
\end{align}

\smallskip

Let $\fing$,
$f$ be as in Introduction,
$\rank \fing$ the rank of $\fing$,
$(~|~)$
the normalized invariant bilinear form of $\fing$.
Let $\mf{s}=\{e,h,f\}$ be an $\mf{sl}_2$-triple in $\fing$,
and let
$\fing_j=\{x\in \fing; [h,x]=2j x\}$ so that
\begin{align}
\fing=\bigoplus_{j\in \frac{1}{2}\Z}\fing_j.
\label{eq:good-grading}
\end{align}
Fix a triangular 
decomposition $\fing=\finn_-\+ \finh\+\finn$
such that
$h\in \finh\subset \fing_0$
and $\finn\subset \fing_{\geq 0}:=\bigoplus_{j\geq 0}
\fing_j$.
We will identify $\fing$ with $\fing^*$ via $(~|~)$.

The {\em Slodowy slice}
to $\Ad G.f$ at $f$
is by definition
the affine subspace
\begin{align*}
\Sl=f+\fing^e
\end{align*}
of $\fing$,
where $\fing^e$ is the centralizer of $e$ in $\fing$.
It is known \cite{GanGin02} that
the  Kirillov-Kostant
Poisson structure
of $\fing^*=\fing$ restricts to $\Sl$.

Let $\ell$ be an $\ad \h$-stable Lagrangian subspace
of $\fing_{1/2}$
with respect to the 
symplectic form
$\fing_{1/2}\times \fing_{1/2}\ra \C$,
$(x,y)\mapsto (f|[x,y])$.
Set
\begin{align*}
\finm=\ell\+\bigoplus_{j\geq 1}\fing_j,
\end{align*}
and 
let
$M$ be the unipotent subgroup of $G$ whose Lie algebra  is $\finm$, 
$\finm^{\bot}=\{x\in \fing; (x|y)=0\text{ for all }y\in
\finm\}$.
Then \cite{GanGin02}
we have the isomorphism of affine varieties 
\begin{align}
 M\times \Sl\isomap f+\finm^{\bot},
\quad (g,x)\mapsto \Ad(g)(x). 
\label{eq:Kostant-Gan-GInzburg}
\end{align}
This induces the following isomorphism
of jet schemes:
\begin{align}
 M_{\infty}\times \Sl_{\infty}\isomap 
(f+\finm^{\bot})_{\infty}.
\label{eq:Kostant-Gan-GInzburg-jets}
\end{align}
Denote by
$I$ and $I_{\infty}$  the defining ideals
of $f+\finm^{\bot}$ 
and $(f+\finm^{\bot})_{\infty}$
in $\fing$ and $\fing_{\infty}$,
respectively.
By (\ref{eq:Kostant-Gan-GInzburg})
and (\ref{eq:Kostant-Gan-GInzburg-jets})
we have
\begin{align*}
\C[\Sl]\cong (\C[\fing]/I)^{M},\quad
\C[\Sl_{\infty}]\cong (\C[\fing_{\infty}]/I_{\infty})^{M_{\infty}}.
\end{align*}

 Let 
\begin{align*}
\bigaffg=\fing[t,t\inv]\+ \C K\+ \C D
\end{align*}
be the affine Kac-Moody algebra
associated with $\fing$,
where $K$ is the central element and $D$ is the degree operator.
Set
$\affg=\fing[t,t\inv]\+ \C K$, the derived algebra of $\bigaffg$.

The {\em universal affine vertex algebra}
$\Vg{k}$
associated with $\fing$ at level $k\in \C$
is 
the induced $\affg$-module
$U(\affg)\*_{U(\fing[t]\+\C K)}\C_k$,
equipped with the natural vertex algebra structure (see e.g.\ \cite{Kac98,FreBen04}).
Here $\C_k$ is the one-dimensional
representation of 
$\fing[t]\+ \C K$ on which 
$\fing[t]$ acts trivially and $K$ acts as a multiplication by $k$.
 The Li filtration of $\Vg{k}$
is essentially the same 
as the
standard filtration of  $U(\fing[t\inv]t\inv)$
under the isomorphism
$U(\fing[t\inv]t\inv)\cong \Vg{k}$, see \cite{Ara09a}.
We have
\begin{align}
\label{eq:R-of-affine}
 &R_{\Vg{k}}\cong \C[\fing^*]
\end{align}
and (\ref{eq:homo})
gives the isomorphism
\begin{align}
\C[\fing^*_{\infty}]\isomap  \gr \Vg{k}
\label{eq:associated graded of affine}
\end{align}
of vertex Poisson algebras.
Let
$ \mf{z}(\affg)$ be the Feigin-Frenkel center
$Z(V^{\cri}(\fing))$ as in Introduction.
It is known 
 \cite{FeiFre92,Fre05,Fre07}
that
the Li filtration 
of $\Vg{\cri}$
restricts to the Li filtration of $\mf{z}(\affg)$.
Moreover
we have
\begin{align}
 R_{\mf{z}(\affg)}\cong \C[\fing^*]^G,
\label{eq:poisson-of-FF-center}
\end{align}
and (\ref{eq:homo})
gives the isomorphism 
\begin{align}
(R_{\mf{z}(\affg)})_{\infty}\isomap \gr \mf{z}(\affg).
\label{eq:associated graded of FF center}
\end{align}
(Hence
the vertex Poisson algebra structure of $\mf{z}(\affg)$ is
trivial\footnote{The vertex Poisson algebra
structure considered in this article is different from the
one in \cite{Fre07}}.)
The isomorphisms
(\ref{eq:poisson-of-FF-center}) and  (\ref{eq:associated graded of FF center})
imply  
 \cite{BeiDri,EisFre01}
that
\begin{align*}
 \gr \mf{z}(\affg)\cong 
\C[(\fing^*/\!/G)_{\infty}]=\C[\fing^*_{\infty}]^{G_{\infty}}.
\end{align*}

For 
$\chi \in \on{Op}_{{}^L\fing}^{\on{reg}}$,
let $\criV{\chi}$ be 
the quotient of $\Vg{\cri}$
by the ideal generated by
$z-\chi(z)$ for $z\in \mf{z}(\affg)$.
Because  
$\mf{z}(\affg)$ acts freely on 
$\Vg{\cri}$  \cite{FreGai04},
it follows from 
(\ref{eq:associated graded of affine})
and (\ref{eq:associated graded of FF center})
that
\begin{align}
 &R_{\criV{\chi}}\cong \C[\mc{N}],
\quad \gr \criV{\chi}\cong \C[\mc{N}_{\infty}].
\label{eq:assocaited-graded-of-critical-simple}
\end{align}
Furthermore, it was proved in 
\cite{FreGai04} that 
the vertex algebra
$\criV{\chi}$ is {\em simple} (thus in particular 
$\criV{\chi}$  is simple  as a $\affg$-module).

Let 
\begin{align*}
\chi_0\in \on{Op}_{{}^L\fing}^{\on{reg}}
\end{align*}
be 
the unique element 
such that
$\{z-\chi_0(z); z\in \FFcenter\}$
is the argumentation ideal 
$\mf{z}(\affg)^*$ of $\mf{z}(\affg)$.
We set
\begin{align*}
\resV=\criV{\chi_0},
\end{align*}
and call it the {\em restricted affine vertex algebra}
associated with $\fing$.
As a $\affg$-module,
$\resV$ is isomorphic to the irreducible highest weight
representation with highest weight $-h\che\Lam_0$.

\smallskip
For $k\in \C$ and
 a $\Vg{k}$-module $M$,
one can define
the
complex  $(C(M),d)$ of 
the BRST cohomology of
the generalized quantized Drinfeld-Sokolov reduction
associated with $(\fing,f)$ (\cite{KacRoaWak03}).
We have
$C(M)=M\* \betagamma{m}\* \bw{\semiinf+\bullet}$,
where
$
m=\frac{1}{2}\dim\fing_{\frac{1}
2}
$,
and 
$\bw{\semiinf+\bullet}$ is the Clifford vertex superalgebra 
of rank $\dim \finn$.
The complex 
$(C(M),d)$ can be identified with 
 Feigin's complex which defines
the semi-infinite cohomology $H^{\semiinf+\bullet}(\fing_{>0}[t,t\inv],
M\* \D^{\ch}(\C^m)
)$,
where 
$\fing_{>0}[t,t\inv]$-module structure of 
$\D^{\ch}(\C^m)$ is described in 
 \cite[\S 3]{Ara05}\footnote{In \cite{Ara05}
$\D^{\ch}(\C^m)$ is denoted by $\mc{F}^{\on{ne}}(\chi)$
}.
Let   
\begin{align*}
\BRS{\bullet}{M}:=H^{\bullet}(C(M),d).
\end{align*}
The {\em $W$-algebra associated with $(\fing,f)$ at level $k$}
is by
 definition
\begin{align*}
\Wg{k}=\BRS{0}{\Vg{k}},
\end{align*}
which is naturally a 
 vertex algebra because
$d$ is the zero mode of a odd  field $d(z)$
of $C(\Vg{k})$.
We have \cite{De-Kac06}
that, for any $k$,
\begin{align}
 R_{\Wg{k}}\cong \C[\Sl],
\label{eq:ring-of-W}
\end{align}
and (\ref{eq:homo})
gives the isomorphism 
\begin{align}
 \C[\Sl_{\infty}]\isomap \gr \Wg{k},
\label{eq:ring-of-grW}
\end{align}
see \cite{Ara09b}.

Let
$k=\cri$.
 For $z\in \mf{z}(\affg)$,
 we have 
$d z=0$, and the class of 
$z$ belongs to the center $Z(\Wg{\cri})$
 of $\Wg{k}$.
Hence
the embedding
$\mf{z}(\affg)
\hookrightarrow V^{\cri}(\fing)$
induces the vertex algebra homomorphism 
$\mf{z}(\affg)\ra   Z(\W^{cri}(\fing,f))\subset \Wg{\cri}$.

\begin{Pro}\label{Pro:injectivity}
  The embedding
$\mf{z}(\affg)
\hookrightarrow V^{\cri}(\fing)$
induces the embedding
$\mf{z}(\affg)\hookrightarrow   \W^{cri}(\fing,f)$.
\end{Pro}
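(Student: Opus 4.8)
The plan is to compare the homomorphism $\mf{z}(\affg)\to\Wg{\cri}$, $z\mapsto[z\*\1\*\1]$, with the comorphism of a morphism of jet schemes, by passing to associated graded vertex Poisson algebras for the Li filtration. First I would observe that this homomorphism is compatible with the Li filtrations. The inclusion $\mf{z}(\affg)\hookrightarrow\Vg{\cri}$ is strictly filtered (recalled above from \cite{FeiFre92,Fre05,Fre07}); the complex $C(\Vg{\cri})=\Vg{\cri}\*\betagamma{m}\*\bw{\semiinf+\bullet}$ carries the filtration induced by the Li filtration of $\Vg{\cri}$ together with the standard filtrations of the two free-field factors, and the BRST differential $d$ is compatible with it -- this filtered complex, and the spectral sequence it produces, is exactly what is used in \cite{Ara09b} to establish \eqref{eq:ring-of-grW}. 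Passing to associated graded we therefore obtain a homomorphism of vertex Poisson algebras $\gr\mf{z}(\affg)\to\gr\Wg{\cri}$.

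By \eqref{eq:poisson-of-FF-center} and \eqref{eq:associated graded of FF center}, $\gr\mf{z}(\affg)\cong\C[\fing^*_\infty]^{G_\infty}=\C[(\fing^*/\!/G)_\infty]$, while $\gr\Wg{\cri}\cong\C[\Sl_\infty]$ by \eqref{eq:ring-of-grW}. On associated graded the quantized Drinfeld--Sokolov reduction degenerates to the classical Hamiltonian reduction along $f+\finm^\bot$: the Koszul part resolves the complete-intersection ideal $I_\infty$ of $(f+\finm^\bot)_\infty$ and the remaining part takes $M_\infty$-invariants, so that $\gr\Wg{\cri}\cong(\C[\fing^*_\infty]/I_\infty)^{M_\infty}\cong\C[\Sl_\infty]$, the last isomorphism being \eqref{eq:Kostant-Gan-GInzburg-jets} (this is the computation of \cite{De-Kac06,Ara09b}). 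Under these identifications the symbol of $[z\*\1\*\1]\in\Wg{\cri}$ is the image of the symbol of $z\in\mf{z}(\affg)$ under the restriction $\C[\fing^*_\infty]^{G_\infty}\hookrightarrow\C[\fing^*_\infty]\to\C[(f+\finm^\bot)_\infty]$, which lands in the $M_\infty$-invariants because a $G_\infty$-invariant function is a fortiori $M_\infty$-invariant. Hence $\gr\mf{z}(\affg)\to\gr\Wg{\cri}$ is precisely the comorphism of the morphism of jet schemes
\begin{align*}
\psi_\infty\colon\Sl_\infty\longra(\fing^*/\!/G)_\infty
\end{align*}
induced by $\psi\colon\Sl=f+\fing^e\hookrightarrow\fing^*\to\fing^*/\!/G$.

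It therefore suffices to show that $\psi_\infty$ is dominant, equivalently that $\C[(\fing^*/\!/G)_\infty]\to\C[\Sl_\infty]$ is injective. Now $\Sl=f+\fing^e$ and $\fing^*/\!/G\cong\C^{\rank\fing}$ are smooth affine spaces, and $\psi$ is faithfully flat by Kostant's theory \cite{GanGin02,Pre02}; in particular $\psi$ is surjective and dominant. By generic smoothness in characteristic zero, $\psi$ is smooth over some dense open $U\subseteq\fing^*/\!/G$, so $\psi^{-1}(U)\to U$ is smooth and surjective. Consequently the induced morphism of arc spaces $\psi_\infty^{-1}(U_\infty)=(\psi^{-1}(U))_\infty\to U_\infty$ is surjective (a smooth surjection lifts arcs by formal smoothness), and $U_\infty$ is dense in $(\fing^*/\!/G)_\infty$ because $U$ is a dense open subset of a smooth variety. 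Hence $\psi_\infty$ is dominant, and $\gr\mf{z}(\affg)\to\gr\Wg{\cri}$ is injective.

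Finally, the Li filtration of $\mf{z}(\affg)$ is separated, $\bigcap_p F^p\mf{z}(\affg)=0$, and a filtered homomorphism out of a separated filtered space whose associated graded map is injective is itself injective: if $0\ne z\in\mf{z}(\affg)$ lies in $F^p\setminus F^{p+1}$, its symbol in $\gr\mf{z}(\affg)$ is nonzero and maps to a nonzero element of $\gr\Wg{\cri}$, so $[z\*\1\*\1]\notin F^{p+1}\Wg{\cri}$ and in particular $[z\*\1\*\1]\ne0$. This proves the proposition. The step I expect to be the main obstacle is the claim in the second paragraph that, on associated graded, $\mf{z}(\affg)\to\Wg{\cri}$ becomes the comorphism of $\psi_\infty$ -- that is, that the symbol of the cocycle $z\*\1\*\1$ suffers no correction from differentials in the spectral sequence of the Li-filtered complex $C(\Vg{\cri})$. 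This rests on the vanishing of the higher cohomology of the classical reduction of $\fing^*_\infty$ (so that the relevant part of the spectral sequence degenerates at the first page), which is exactly the input recorded in \cite{De-Kac06,Ara09b} and underlying \eqref{eq:ring-of-grW}.
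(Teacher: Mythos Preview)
Your proof is correct and follows the same strategy as the paper's: reduce to the associated graded for the Li filtration, identify the induced map $\gr\mf{z}(\affg)\to\gr\Wg{\cri}$ with the comorphism of $\psi_\infty\colon\Sl_\infty\to(\fing^*/\!/G)_\infty$, and check that this comorphism is injective.

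The difference is one of detail. The paper's argument is two sentences: it notes that $R_{\mf{z}(\affg)}\to R_{\Wg{\cri}}$ is the restriction $\C[\fing^*]^G\to\C[\Sl]$, which is injective by \cite{Kos78,Pre02}, and then simply asserts that ``therefore it induces the injective map'' on $(\,\cdot\,)_\infty$. You supply an actual argument for this step (dominance of $\psi_\infty$ via generic smoothness and formal arc lifting over the smooth locus, together with density of $U_\infty$ in $(\fing^*/\!/G)_\infty$ since the target is smooth). This is a genuine elaboration: injectivity of a ring map $A\hookrightarrow B$ does not in general imply injectivity of $A_\infty\to B_\infty$, so some property of $\psi$ beyond mere injectivity of $\psi^*$ must be invoked, and you invoke the right one. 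You also make explicit the final step (separatedness of the Li filtration), which the paper leaves implicit. The concern you flag at the end --- that the symbol of $[z\*\1\*\1]$ might acquire corrections --- is handled exactly as you say, by the degeneration results of \cite{De-Kac06,Ara09b} underlying \eqref{eq:ring-of-grW}; the paper takes this identification for granted as well.
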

\begin{proof}
It is sufficient to show that it induces an injective homomorphism
 $\gr \mf{z}(\affg)
\ra \gr \Wg{\cri}
$.
Under the identification
(\ref{eq:R-of-affine}) and (\ref{eq:ring-of-W}),
the induced map 
$R_{\Vg{\cri}}\ra R_{\Wg{\cri}}$
is identified  the restriction map
$\C[\fing^*]^G\ra \C[\Sl]$,
and hence is
 injective \cite{Kos78,Pre02}.
Therefore
it induces the injective map
$(R_{\Vg{\cri}})_{\infty}\hookrightarrow (R_{\Wg{\cri}})_{\infty}$,
which
is identical to 
the map $\gr \mf{z}(\affg)
\ra \gr \Wg{\cri}
$.
\end{proof}
By 
Proposition  \ref{Pro:injectivity}
we can define the quotient 
$\criW{\chi}$
of $\Wg{\cri}$ for $\chi\in 
\on{Op}_{{}^L\fing}^{\on{reg}}$
as in Introduction.
Let
\begin{align*}
\resW=\criW{\chi_0}
\end{align*}
and call it the {\em restricted $W$-algebra}
associated with $(\fing,f)$.
It is a graded quotient of $\Wg{\cri}$.

\begin{Rem}
 Let $\Zhu(\resW)$ 
be the Ramond twisted Zhu algebra \cite{De-Kac06}
of $\resW$.
Then from Proposition \ref{Pro:description-of-restricted-W}
below it follows that
\begin{align*}
\Zhu(\resW)\cong U(\fing,f)/Z(\fing)^* U(\fing,f),
\end{align*}
where $Z(\fing)^*$ is the argumentation ideal of $Z(\fing)$.
\end{Rem}
Set $\mc{S}=\Sl\cap \Nil$ as in Introduction.
By restricting (\ref{eq:Kostant-Gan-GInzburg})
and (\ref{eq:Kostant-Gan-GInzburg-jets}),
we obtain the isomorphisms
\begin{align}
&  M\times \mc{S}\isomap (f+\finm^{\bot})\cap \Nil,\\
 & M_{\infty}\times \mc{S}_{\infty}\isomap 
((f+\finm^{\bot})\cap \Nil)_{\infty}
=(f+\finm^{\bot})_{\infty}\cap \Nil_{\infty}.
\label{eq:jets-of-S-N}
\end{align}
\begin{Pro}\label{Pro:description-of-restricted-W}
We have the following.
\begin{enumerate}
 \item $\BRS{i}{\resV}=0$ for $i\ne 0$
and  
$\BRS{0}{\resV}\cong \resW$ as vertex algebras. 
\item $\Wg{\cri}$ is free over $\FFcenter$.
\item $R_{\resW}\cong \C[\mc{S}]$ as Poisson algebras
and $\gr \resW\cong \C[\mc{S}_{\infty}]$ as vertex Poisson algebras.
\end{enumerate}
\end{Pro}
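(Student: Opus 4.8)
The plan is to run the BRST reduction functor on the short exact sequences that define $\resV$ out of $\Vg{\cri}$, using the freeness of the Feigin--Frenkel center on both $\Vg{\cri}$ (known by \cite{FreGai04}) and on $\Wg{\cri}$ (which is part (ii), to be proved simultaneously), together with a spectral sequence comparing the BRST cohomology of $\resV$ with that of $\Vg{\cri}$. For part (i), I would first recall that $\resV=\criV{\chi_0}=\Vg{\cri}/\mf{z}(\affg)^*\Vg{\cri}$, and since $\mf{z}(\affg)$ acts freely on $\Vg{\cri}$, a Koszul-type resolution of $\resV$ by free $\mf{z}(\affg)$-modules of the form $\Vg{\cri}\otimes\bigwedge^{\bullet}$ is available. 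The BRST differential $d$ commutes with the $\mf{z}(\affg)$-action (because $dz=0$ for $z\in\mf{z}(\affg)$), so applying $\BRS{\bullet}{-}$ degreewise gives a spectral sequence. The key input is that $\BRS{i}{\Vg{\cri}}=0$ for $i\neq 0$ and $\BRS{0}{\Vg{\cri}}=\Wg{\cri}$ — this is the standard vanishing theorem for generalized Drinfeld--Sokolov reduction of $\Vg{k}$ (due to Feigin--Frenkel in the principal case, and in general \cite{KacRoaWak03,Ara05}), which holds at every level including the critical one. Feeding this into the spectral sequence collapses it and yields $\BRS{i}{\resV}=0$ for $i\neq 0$ and $\BRS{0}{\resV}\cong\Wg{\cri}/\mf{z}(\affg)^*\Wg{\cri}=\resW$ as vertex algebras, provided we know $\mf{z}(\affg)$ (equivalently its image $Z(\Wg{\cri})$, injective by Proposition \ref{Pro:injectivity}) acts freely on $\Wg{\cri}$.

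For part (ii), the freeness of $\Wg{\cri}$ over $\FFcenter$, I would argue on the level of associated graded objects. By \eqref{eq:ring-of-grW} we have $\gr\Wg{\cri}\cong\C[\Sl_\infty]$, and by \eqref{eq:associated graded of FF center} together with \eqref{eq:poisson-of-FF-center} we have $\gr\mf{z}(\affg)\cong\C[(\fing^*/\!/G)_\infty]=\C[\fing^*_\infty]^{G_\infty}$. The map $\gr\mf{z}(\affg)\to\gr\Wg{\cri}$ is, by the proof of Proposition \ref{Pro:injectivity}, the jet-scheme version of the restriction $\C[\fing^*]^G\to\C[\Sl]$, i.e.\ the comorphism of $\Sl_\infty\to(\fing^*/\!/G)_\infty$. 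Now Kostant's theorem says $\Sl\to\fing^*/\!/G$ is faithfully flat (indeed $\C[\Sl]$ is free over $\C[\fing^*]^G$ via the Kostant section), and this flatness propagates to jet schemes: the morphism $\Sl_\infty\to(\fing^*/\!/G)_\infty$ is flat, because flatness of a morphism of finite type $\C$-schemes is inherited by all jet functors (this is where \eqref{eq:Kostant-Gan-GInzburg-jets} and the explicit product decomposition are useful — one reduces to a free polarization statement). Hence $\gr\Wg{\cri}$ is flat, and being graded with finite-dimensional graded pieces, free, over $\gr\FFcenter$; a standard filtered-to-graded lifting argument (the filtration is exhaustive, separated, and bounded below in each conformal weight) then gives that $\Wg{\cri}$ is free over $\FFcenter$.

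For part (iii), once (i) and (ii) are in hand, I would take associated graded objects in the isomorphism $\resW\cong\BRS{0}{\resV}$. Since $\resW$ is a graded quotient of $\Wg{\cri}$ by the augmentation ideal $\mf{z}(\affg)^*$, and $\Wg{\cri}$ is free over $\FFcenter$ by (ii), we get $\gr\resW\cong\gr\Wg{\cri}/(\gr\FFcenter)^*\,\gr\Wg{\cri}$, which under \eqref{eq:ring-of-grW} and the identification of $(\gr\FFcenter)^*$ with the ideal of the fiber over $\chi_0$ becomes $\C[\Sl_\infty]/(\text{ideal of }\Nil_\infty\text{ pulled back})=\C[\Sl_\infty\cap\Nil_\infty]=\C[\mc{S}_\infty]$, using \eqref{eq:jets-of-S-N} to identify the scheme-theoretic intersection; the $M_\infty$-invariance bookkeeping matches on both sides. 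Passing to Zhu's Poisson algebra $R_{\resW}=\resW/F^1\resW$ gives $R_{\resW}\cong\C[\mc{S}]$ likewise. The main obstacle I anticipate is part (ii): proving that flatness of $\Sl\to\fing^*/\!/G$ survives the jet-scheme functor, since jet functors are not exact in general and one must genuinely use the Gan--Ginzburg product structure \eqref{eq:Kostant-Gan-GInzburg-jets} (or an explicit Kostant-section polarization) to exhibit $\C[\Sl_\infty]$ as a free module over $\C[(\fing^*/\!/G)_\infty]$, rather than merely flat; everything else is a formal consequence of the vanishing theorem and the spectral sequence.
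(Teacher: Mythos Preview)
Your argument runs in the opposite logical direction from the paper's. You want to prove (ii) first, by a direct geometric argument about jet schemes, and then feed that into a Koszul-type spectral sequence (with input $\BRS{i}{\Vg{\cri}}=0$) to obtain the vanishing in (i). The paper does the reverse: it takes the vanishing $\BRS{i}{\resV}=0$ for $i\neq 0$ as already known (citing \cite{Ara09b}, since $\resV$ is $G$-integrable), filters $\Vg{\cri}$ by $\Gamma^{p}=(\mf{z}(\affg)^{*})^{p}\Vg{\cri}$ so that $\gr^{\Gamma}\Vg{\cri}\cong\resV\otimes_{\C}\mf{z}(\affg)$ by the Frenkel--Gaitsgory freeness, and then the vanishing for $\resV$ collapses the associated spectral sequence to give $\gr^{\Gamma}\Wg{\cri}\cong\BRS{0}{\resV}\otimes_{\C}\mf{z}(\affg)$. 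This single isomorphism yields both $\BRS{0}{\resV}\cong\resW$ and the freeness (ii) at once. Part (iii) is then computed from the BRST realisation $\resW=\BRS{0}{\resV}$ via \cite[Theorem 4.4.3]{Ara09b}, \eqref{eq:assocaited-graded-of-critical-simple} and \eqref{eq:jets-of-S-N}, rather than by quotienting $\gr\Wg{\cri}$ as you propose.

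The two arguments are in effect the two spectral sequences of the same bicomplex, run in opposite orders. The practical difference is in what each route demands as input. The paper's order uses a vanishing theorem that is already in the literature, so (ii) comes for free. Your order needs an independent proof that $\C[\Sl_{\infty}]$ is free over $\C[(\fing^{*}/\!/G)_{\infty}]$, and here there is a real gap: the sentence ``flatness of a morphism of finite type $\C$-schemes is inherited by all jet functors'' is false as stated (you yourself note a few lines later that jet functors are not exact). What you would actually need is that the jets $T^{n}\bar p^{(i)}$ of the invariant generators remain a regular sequence in the polynomial ring $\C[\Sl_{\infty}]$, i.e.\ that $\mc{S}_{\infty}$ is a complete intersection of the expected codimension in $\Sl_{\infty}$. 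This is true, and can be extracted from \eqref{eq:jets-of-S-N} together with the fact that $\Nil_{\infty}$ is a complete intersection in $\fing^{*}_{\infty}$, but establishing it is nontrivial and is precisely the work the paper's ordering avoids.
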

\begin{proof}
(i) Because $\resV$ is $G$-integrable,
the vanishing assertion was already proved in   \cite{Ara09b}.
Let
$\Gamma^p\Vg{\cri}=
(\mf{z}(\affg)^*)^p \Vg{\cri}$.
Then
$\{\Gamma^p \Vg{\cri}\}$ defines a decreasing filtration
of $\Vg{\cri}$ as $\affg$-modules,
and the freeness of $\Vg{\cri}$ over $\mf{z}(\affg)$
implies that
\begin{align*}
 \gr^{\Gamma}\Vg{\cri}\cong \resV\*_{\C} \mf{z}(\affg)
\end{align*}
as $\Vg{\cri}\*_{\C} \mf{z}(\affg)$-modules.
The vanishing
of $\BRS{\bullet}{\resV}$
implies that the spectral sequence associated with  the filtration
$\{\Gamma^p \Vg{\cri}\}$ collapses at $E_1=E_{\infty}$
and that
 \begin{align}
  \gr^{\Gamma}\BRS{0}{\Vg{\cri}}\cong 
\BRS{0}{\resV}\*_{\C} \mf{z}(\affg).
\label{eq:freeness}
 \end{align}
as $\Vg{\cri}\*_{\C} \mf{z}(\affg)$-modules.
This proves
the second assertion.
(ii) follows from (\ref{eq:freeness}).
(iii)
By (i),
the  Li filtration of $\resV$ induces a filtration 
of $\resW=\BRS{0}{\resV}$,
which we temporary denote
 by  $\{G^p \resW\}$.
By
\cite[Theorem 4.4.3]{Ara09b}, 
(\ref{eq:assocaited-graded-of-critical-simple})
and (\ref{eq:jets-of-S-N})
we obtain
\begin{align*}
\gr^{G} \resW
\cong (\gr \resV /I_{\infty}\resV )^{M_{\infty}}
\cong (\C[\Nil_{\infty}]/I_{\infty}\C[\Nil_{\infty}])^{M_{\infty}}
= \C[\mc{S}_{\infty}].
\end{align*}
It remains to show that
the filtration $\{G^p \resW\}$ coincides with the
Li filtration of $\resW$.
But this can be seen as in the same manner as \cite[Theorem 4.4.6]{Ara09b}.
\end{proof}

\begin{Rem}
By the vanishing result of \cite{Gin08},
the same argument proves the freeness of
$U(\fing,f)$ over $\mc{Z}(\fing)$.
\end{Rem}
\begin{Pro}\label{Pro:conforma-weight-filtartion}
 Let $\lam \in \on{Op}_{{}^L\fing}^{\on{reg}}$.
Then $\BRS{i}{\criV{\chi}}=0$ for $i\ne 0$
and $\BRS{0}{\criV{\chi}}\cong \criW{\chi}$.
We have $\gr \criW{\chi}\cong \C[\mc{S}_{\infty}]$
as vertex Poisson algebras.
\end{Pro}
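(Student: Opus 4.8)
The plan is to mimic the proof of Proposition \ref{Pro:description-of-restricted-W}, but now working relative to an arbitrary regular oper $\chi$ rather than the distinguished $\chi_0$. First I would introduce the $\mf{z}(\affg)$-adic type filtration $\Gamma^p \criV{\chi}=(\mf{z}(\affg)_\chi)^p \Vg{\cri}$, where $\mf{z}(\affg)_\chi=\{z-\chi(z);z\in\FFcenter\}$ is the maximal ideal of $\FFcenter$ corresponding to $\chi$. Since $\FFcenter$ acts freely on $\Vg{\cri}$ by \cite{FreGai04}, the associated graded $\gr^\Gamma \Vg{\cri}$ is isomorphic to $\resV\otimes_\C \mf{z}(\affg)$ as a module (the point being that $\gr^\Gamma$ for the $\chi$-adic and the $\chi_0$-adic filtrations agree, both computing the associated graded of a free module over the local ring, which is a polynomial algebra); in particular $\Vg{\cri}/\Gamma^1\Vg{\cri}\cong\criV{\chi}$ and more precisely $\gr^\Gamma\Vg{\cri}\cong\criV{\chi}\otimes_\C\mf{z}(\affg)$.

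Next I would apply the BRST functor $\BRS{\bullet}{-}$ to this filtration. The crucial input is the vanishing $\BRS{i}{\criV{\chi}}=0$ for $i\neq 0$. This follows because $\criV{\chi}$ is a simple $\affg$-module \cite{FreGai04} whose associated graded is $\C[\Nil_\infty]$ by (\ref{eq:assocaited-graded-of-critical-simple}); the vanishing of the higher BRST cohomology then follows from the same argument as in \cite[Theorem 4.4.3]{Ara09b}, or alternatively by a spectral sequence comparing with the $\Vg{\cri}$ case together with the flatness over $\FFcenter$ — the key geometric fact being that $(f+\finm^\bot)_\infty\cap\Nil_\infty$ has the expected codimension, so that $\Nil_\infty$ behaves well under the Drinfeld-Sokolov reduction. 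Granting this vanishing, the spectral sequence for the filtration $\{\Gamma^p\criV{\chi}\}$ — whose $E_1$-page is $\BRS{\bullet}{\criV{\chi}\otimes_\C\mf{z}(\affg)}=\BRS{0}{\criV{\chi}}\otimes_\C\mf{z}(\affg)$ concentrated in cohomological degree $0$ — collapses, giving $\BRS{0}{\Vg{\cri}}=\gr^\Gamma\BRS{0}{\Vg{\cri}}$ only after passing back, i.e. $\gr^\Gamma\Wg{\cri}\cong\BRS{0}{\criV{\chi}}\otimes_\C\mf{z}(\affg)$. Combined with the freeness in Proposition \ref{Pro:description-of-restricted-W}(ii), this forces $\BRS{0}{\criV{\chi}}\cong\Wg{\cri}/\mf{z}(\affg)_\chi\Wg{\cri}=\criW{\chi}$.

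Finally, for the associated graded statement, I would run the argument of Proposition \ref{Pro:description-of-restricted-W}(iii) verbatim with $\chi_0$ replaced by $\chi$: the Li filtration of $\criV{\chi}$ induces a filtration $\{G^p\criW{\chi}\}$ on $\criW{\chi}=\BRS{0}{\criV{\chi}}$, and by \cite[Theorem 4.4.3]{Ara09b}, (\ref{eq:assocaited-graded-of-critical-simple}) and (\ref{eq:jets-of-S-N}) one computes
\begin{align*}
\gr^G\criW{\chi}\cong(\gr\criV{\chi}/I_\infty\criV{\chi})^{M_\infty}\cong(\C[\Nil_\infty]/I_\infty\C[\Nil_\infty])^{M_\infty}=\C[\mc{S}_\infty].
\end{align*}
Then one checks, as in \cite[Theorem 4.4.6]{Ara09b}, that $\{G^p\criW{\chi}\}$ coincides with the Li filtration of $\criW{\chi}$, which upgrades the isomorphism to one of vertex Poisson algebras.

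I expect the main obstacle to be the higher BRST vanishing $\BRS{i}{\criV{\chi}}=0$ for $i\neq 0$ when $\chi\neq\chi_0$: for $\chi_0$ one exploits that $\resV$ is $G$-integrable (so the reduction is an exact functor by Frenkel-type results), but for general regular $\chi$ the module $\criV{\chi}$ is not integrable, so one must instead deduce the vanishing from the freeness over $\FFcenter$ and the vanishing in the universal case, or directly from the cohomological purity established in \cite{Ara09b}. Once that is in hand, the rest is a routine transplant of the arguments already used for $\resW$.
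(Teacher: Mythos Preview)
Your approach differs from the paper's, and the difference is precisely at the point you yourself flag as the obstacle. The paper does not try to prove the vanishing $\BRS{i}{\criV{\chi}}=0$ directly for general $\chi$. Instead it uses the \emph{conformal weight filtration} $\{E_p\criV{\chi}\}$ of \cite[3.3.2]{AraCheMal08}: the key observation is that $\gr^E\criV{\chi}\cong\resV$ as a $\affg$-module for \emph{every} $\chi\in\on{Op}_{{}^L\fing}^{\on{reg}}$, because passing to $\gr^E$ degenerates the central character to $\chi_0$. Then Proposition~\ref{Pro:description-of-restricted-W} (the $G$-integrable case) gives immediately $\gr^E\BRS{i}{\criV{\chi}}\cong\BRS{i}{\resV}$, which vanishes for $i\neq 0$ and equals $\resW$ for $i=0$. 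All three assertions follow from this single filtration argument.

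Your route---establish the vanishing first, then run a $\chi$-adic $\Gamma$-filtration on $\Vg{\cri}$ to identify $\BRS{0}{\criV{\chi}}$ with $\criW{\chi}$, then transplant the Li-filtration computation of Proposition~\ref{Pro:description-of-restricted-W}(iii)---is logically coherent, and the second and third steps are fine. But your justification for the vanishing has a gap. You invoke ``the same argument as in \cite[Theorem~4.4.3]{Ara09b}'', but as used in this paper that result computes the associated graded of $H^0$, not vanishing of higher cohomology; and the vanishing for $\resV$ genuinely uses $G$-integrability, which (as you note) fails for $\criV{\chi}$ with $\chi\neq\chi_0$. Your fallback suggestions (``freeness over $\FFcenter$ plus the universal case'', ``cohomological purity'') are not made precise. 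The conformal weight filtration is exactly the missing idea: it lets you borrow the $G$-integrable vanishing without ever needing $\criV{\chi}$ itself to be integrable. (Minor point: in your first paragraph you write $\Gamma^p\criV{\chi}$ where you mean $\Gamma^p\Vg{\cri}$.)
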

\begin{proof}
We have proved the assertion in the case that  $\chi=\chi_0$
in Proposition \ref{Pro:description-of-restricted-W}.
The general case follows from this by the following argument:
Consider the {\em conformal weight filtration}
of a vertex algebra $V$ defined in \cite[3.3.2]{AraCheMal08},
which we denote by $\{E_p V\}$.
Then $\gr^E \criV{\chi}$
is a $\affg$-module isomorphic to
$\resV$ for any $\lam \in \on{Op}_{{}^L\fing}^{\on{reg}}$.
Hence 
Proposition \ref{Pro:description-of-restricted-W}
implies that 
\begin{align}
\gr^E \BRS{i}{\criV{\chi}}=\begin{cases}
			     \resW&\text{for }i=0\\
0&\text{for }i \ne 0.	    \end{cases}
\label{eq:conformal-filtration--}
\end{align}
This completes the proof.
\end{proof}
\section{BRST cohomology of restricted Wakimoto modules}
Let $\affh=\finh\+ \C K\+ \C D$,
the Cartan subalgebra of $\bigaffg$,
$\dual{\affh}=\dual{\finh}\+ \C \delta\+ \C \Lam_0$
the dual of $\affh$,
where
$\delta$ and $\Lam_0$ are the elements dual to $D$ and $K$,
respectively.
Put
$\dual{\affh}_k=\{\lam\in \dual{\affh};
\lam(K)=k\}$.

For
$\lam\in \dual{\affh}_{\cri}$,
let $\Wak{\lam}^{\res}$ 
be the {\em restricted Wakimoto module}  
  \cite{FeuFre90,Fre05}\footnote{In \cite{Fre05}
$\Wak{\lam}^{\res}$ is denoted by $W_{\bar \lam/t}$.}
with highest weight $\lam$.
Set
\begin{align*}
 \resWa:=\Wak{\cri \Lam_0}^{\res}.
\end{align*}
The module $\resWa$
has a natural vertex algebra structure,
and  $\resWa\cong \D^{\ch}(\C^{\dim \finn})$
as vertex algebras.
There is a vertex algebra homomorphism
$\Vg{\cri}\ra \resWa$,
which 
induces the vertex algebra homomorphism
\begin{align}
\omega: \resV \ra \resWa
\label{eq:affine-to-restricted-Wakimoto},
\end{align}
see \cite{Fre05}.
The map  $\omega$ is injective 
because  $\resV$ is simple.

The fact that
$\resV$ is a vertex subalgebra of
$\resWa$
implies that
$d(z)$ can be considered as   a field on $\resWa$.
Hence
$\BRS{\bullet}{\resWa}$ is also naturally a vertex algebra.
Thus  applying the functor $\BRS{0}{?}$  
to (\ref{eq:affine-to-restricted-Wakimoto}),
we obtain the vertex algebra homomorphism
\begin{align*}
\omega_\W: \resW=\BRS{0}{\resV}\ra \BRS{0}{\resWa}.
\end{align*}

\begin{Pro}\label{Pro:injective restricted}
The vertex algebra homomorphism $\omega_{\W}
$
is injective.
In fact it induces an injective homomorphism 
$\gr \resW
\hookrightarrow \gr \BRS{0}{\resWa}
$
of vertex Poisson algebras.
\end{Pro}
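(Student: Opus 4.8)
The plan is to prove the stronger (``in fact'') statement; injectivity of $\omega_{\W}$ then follows automatically, because $\resW$ is graded by conformal weight, bounded from below, with finite-dimensional graded pieces and one-dimensional bottom piece, so its Li filtration is separated and a morphism of such filtered vertex algebras whose associated graded is injective is itself injective. Thus the whole task is to understand $\gr$ of the BRST complexes $C(\resV)$, $C(\resWa)$ and of the map $C(\omega)\colon C(\resV)\to C(\resWa)$.

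First I would record the geometry on the level of $\gr$. By $(\ref{eq:assocaited-graded-of-critical-simple})$, $\gr\resV\cong\C[\Nil_{\infty}]$, and since $\resWa\cong\D^{\ch}(\C^{\dim\finn})$ as a vertex algebra, $\gr\resWa\cong\C[(T^{*}\C^{\dim\finn})_{\infty}]$ (as in Section 2). The homomorphism $\Vg{\cri}\to\resWa$ induces on $\gr$ the comorphism of a map $\tilde\mu\colon T^{*}\C^{\dim\finn}\to\fing^{*}=\fing$, namely the classical limit of the Wakimoto realisation at the critical level with the zero (nilpotent) oper $\chi_{0}$; this is the restriction of the Springer moment map $T^{*}(G/B)\to\Nil$ to the big cell $T^{*}\C^{\dim\finn}=T^{*}(B_{-}B/B)$. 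Since $\omega$ factors $\Vg{\cri}\to\resWa$ through the simple vertex algebra $\resV$, whose $\gr$ is $\C[\Nil_{\infty}]$, the image of $\tilde\mu$ lies in $\Nil$, giving a map $\mu\colon T^{*}\C^{\dim\finn}\to\Nil$ that is birational onto a dense open subset $U$ of $\Nil$. As $\Nil$ is a complete-intersection variety with rational singularities, $\Nil_{\infty}$ is reduced and irreducible, so the dominant $\mu_{\infty}$ makes $\gr\omega=\mu_{\infty}^{*}$ injective; equivalently, $\omega$ is strict for the Li filtrations and $C(\omega)$ is a strict inclusion of filtered complexes.

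Next I would compute the associated gradeds of the BRST cohomologies by classical Hamiltonian reduction. The Li filtrations induce filtrations on $C(\resV)$ and $C(\resWa)$ whose associated gradeds are the classical (Koszul--Chevalley--Eilenberg) complexes reducing $\gr\resV\*\C[(T^{*}\C^{m})_{\infty}]$ and $\gr\resWa\*\C[(T^{*}\C^{m})_{\infty}]$ by $\finm[t,t\inv]$. For $\resV$ this is precisely the argument in the proof of Proposition \ref{Pro:description-of-restricted-W}, via \cite[Theorem~4.4.3]{Ara09b}: since by $(\ref{eq:Kostant-Gan-GInzburg})$--$(\ref{eq:jets-of-S-N})$ one has $\Nil\cap(f+\finm^{\bot})\cong M\times\mc{S}$ with $M$ acting freely, $H^{i}(\gr C(\resV))=0$ for $i\ne 0$, $H^{0}(\gr C(\resV))\cong(\gr\resV/I_{\infty}\gr\resV)^{M_{\infty}}\cong\C[\mc{S}_{\infty}]$, the spectral sequence of the filtration degenerates, and $\gr\resW\cong\C[\mc{S}_{\infty}]$. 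The same argument applies to $\resWa$: $\gr\resWa\cong\C[(T^{*}\C^{\dim\finn})_{\infty}]$ is a polynomial (free-field) algebra over $\C[\fing^{*}_{\infty}]$ along $\tilde\mu_{\infty}$; the point $f$ lies both in $\Nil\cap(f+\finm^{\bot})$ and in $U$ (the opposite Borel $\finb_{-}$ is in the big cell and $f\in\finn_{-}$), so $\mu$ is birational onto $\Nil\cap(f+\finm^{\bot})\cong M\times\mc{S}$ over a dense open set; hence, using $\dim\Ad G.f=2\dim\finm$, the scheme $\tilde\mu^{-1}(f+\finm^{\bot})$ is a complete intersection in $T^{*}\C^{\dim\finn}$ of codimension $\dim\finm$, isomorphic (via the restricted $M$-equivariance of $\tilde\mu$) to $M\times Y$ with $Y=\mu^{-1}(\mc{S})$ and free $M_{\infty}$-action. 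Therefore $H^{i}(\gr C(\resWa))=0$ for $i\ne 0$, the spectral sequence for $\resWa$ degenerates, and $\gr\BRS{0}{\resWa}\cong\C[Y_{\infty}]$. By strictness of $C(\omega)$, the map $\gr\omega_{\W}\colon\C[\mc{S}_{\infty}]\to\C[Y_{\infty}]$ is the comorphism of the jet-scheme morphism induced by $\mu|_{Y}\colon Y\to\mc{S}$, which is birational (the $M$-quotient of the birational $M\times Y\to M\times\mc{S}$); since $\mc{S}$ is a reduced, irreducible, normal complete intersection with rational singularities, $\mc{S}_{\infty}$ is reduced and irreducible, so $\mu|_{Y}$ dominant forces $\gr\omega_{\W}$ to be injective. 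This proves the proposition.

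The step I expect to be the main obstacle is carrying the ``associated graded commutes with BRST reduction'' argument of \cite[Theorem~4.4.3]{Ara09b}, established there for $G$-integrable modules, over to the non-integrable module $\resWa$: one must verify the regularity of the constraint sequence on $\gr\resWa$, the freeness of the $M_{\infty}$-action on $\tilde\mu^{-1}(f+\finm^{\bot})_{\infty}$, and the resulting vanishing and degeneration, as well as the reducedness and irreducibility of $\Nil_{\infty}$ and $\mc{S}_{\infty}$. What makes all this tractable is exactly the explicit identification $\resWa\cong\D^{\ch}(\C^{\dim\finn})$, which turns $\gr\resWa$ into a polynomial ring and the orbit geometry of $\tilde\mu$ over $f+\finm^{\bot}$ (free $M$-action, complete intersection) just as favourable as in the integrable case; so one re-runs the vanishing and degeneration arguments rather than quoting them.
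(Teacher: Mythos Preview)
Your approach is correct and is essentially the paper's own argument. The paper isolates your middle computation as a separate Proposition (numbered 3.2 there): it shows $\BRS{i}{\resWa}=0$ for $i\ne 0$ and identifies $\gr\BRS{0}{\resWa}\cong\C[V_{\infty}]$ with $V=\wSl\cap\tilde U$, which is exactly your $Y=\mu^{-1}(\mc{S})$ inside the big cell, via the Hamiltonian-reduction isomorphism $M\times V\isomap\Sigma\cap\tilde U$ that you also use. Two small differences are worth noting.

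First, for the vanishing $\BRS{i}{\resWa}=0$ the paper does not rely solely on the Li-filtration/associated-graded argument you propose to re-run; it also invokes the decomposition $d=d^{\st}+d^{\chi}$ and Voronov's theorem on semi-infinite cohomology (freeness over $\finn[t^{-1}]t^{-1}$ and cofreeness over $\finn[t]$, both obvious for the $\beta\gamma$-system). This gives the vanishing directly at the level of $C(\resWa)$ and bypasses the concern you flag about transporting the $G$-integrable argument from \cite{Ara09b}. Your way also works, for the reason you give (the polynomial structure of $\gr\resWa$ makes the constraint sequence regular and the $M_\infty$-action free), and indeed the paper uses precisely that computation for $H^{\bullet}(\gr C(\resWa))$; it simply secures convergence by establishing the cohomological vanishing independently first.

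Second, for the final injectivity the paper argues at the level of $R$: it writes $\C[\mc{S}]\to\C[V]$ as the composition $\C[\mc{S}]\isomap\Gamma(\wSl,\mc{O}_{\wSl})\to\C[V]$, the first map an isomorphism because $\mu|_{\wSl}$ is a resolution and $\mc{S}$ is normal, the second injective because $V\subset\wSl$ is open dense; injectivity on jets then follows since both gradeds are the jet algebras of their $R$'s. Your route through reducedness and irreducibility of $\mc{S}_\infty$ (Musta\c{t}\u{a}, using that $\mc{S}$ is lci with rational singularities) and dominance of $V_\infty\to\mc{S}_\infty$ reaches the same conclusion.
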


In order to prove Proposition \ref{Pro:injective restricted}
we first describe the homomorphism
\begin{align*}
 \bar \omega: \gr \resV\ra \gr \resWa
\end{align*}
induced by $\omega$.
Recall that $\gr \resV\cong \C[\Nil_{\infty}]$
and $R_{\resV}\cong \C[\Nil]$, see (\ref{eq:assocaited-graded-of-critical-simple}).
 Let $\mc{B}$ be the set of Borel subalgebras in  $\fing$,
or the flag variety of $\fing$.
Denote by
 $U$ be the big cell,
i.e.,
the unique open $N$-orbit in $\mc{B}$,
where 
$N$ is the unipotent subgroup of $G$
corresponding to $\finn$.
Let $T^*\mc{B}$ be the cotangent bundle
of $\mc{B}$,
$\pi: T^*\mc{B}\ra \mc{B}$ the projection,
\begin{align*}
 \tilde{U}=\pi\inv(U).
\end{align*}
By construction \cite{Fre05}
we have
\begin{align}
R_{\resWa}\cong \C[\tilde{U}],
\quad \gr \resWa \cong (R_{\resWa})_{\infty}=\C[\tilde{U}_{\infty}],
\label{eq:a.g.restricited-wakimoto}
\end{align}
and
the homomorphism
\begin{align*}
\bar \omega_{|R_{\resV}}
:R_{\resV}\ra  R_{\resWa} 
\end{align*}
may be identified with
the
restriction $\tilde{U}\ra \mc{N}$
of the Springer resolution 
\begin{align*}
 \mu: T^* \mc{B}\ra \Nil.
\end{align*}
This in particular shows that
$\bar \omega$ 
is also injective.
Indeed, 
$\bar \omega_{|R_{\resV}}$
is injective because
it
is the composition of the isomorphism 
$\mu^*:\C[\mc{N}]\isomap  \Gamma(T^*\mc{B},\mc{O}_{T^*\mc{B}})$,
with
 the restriction map
$\Gamma(T^*\mc{B}, \mc{O}_{T^* \mc{B}})\ra 
\Gamma(\tilde{U},\mc{O}_{T^*\mc{B}})=\C[\tilde{U}]$.
Hence
it induces an injection 
$(R_{\resV})_{\infty}\hookrightarrow (R_{\resWa})_{\infty}$,
and this is identical to 
$\bar \omega$.

\begin{Rem}\label{Rem:cdo}
Let $\D_{\mc{B}}^{\ch}$
be the sheaf of {\em chiral differential operators}
\cite{GorMalSch04,MalSchVai99,BeiDri04}
 on $\mc{B}$,
which exists uniquely \cite{GorMalSch01,ArkGai02}.
It is a sheaf of vertex algebras
on $\mc{B}$,
and we have 
\begin{align}
 R_{\D_{\mc{B}}^{\ch}}
\cong \pi_* \mc{O}_{T^* \mc{B}},
\quad \gr \D_{\mc{B}}^{\ch}
\cong (\pi_{\infty})_* \mc{O}_{(T^*\mc{B})_{\infty}},
\label{eq;associated-graded-of-cdo}
\end{align}
where
$R_{\D_{\mc{B}}^{\ch}}$
and $\gr \D_{\mc{B}}^{\ch}$ are
the corresponding sheaves of Zhu's Poisson algebras
 respectively,
and
$\pi_{\infty}: (T^*\mc{B})_{\infty}\ra \mc{B}$
is the projection.
 We have
\begin{align}
\D_\mc{B}^{\ch}(U)\cong 
\resWa
\label{eq:Wakimovo-vs-CDO}
\end{align} as vertex algebras.
The homomorphism
(\ref{eq:affine-to-restricted-Wakimoto})
lifts to 
a vertex algebra homomorphism
\begin{align}
\omega_{\res}: \resV\ra \Gamma (\mc{B},\D_{\mc{B}}^{\ch}),
\end{align}
which is in fact an {\em isomorphism} \cite{AraCheMal08}.
\end{Rem}

Next we describe the
vertex Poisson algebra
structure of 
$\gr \BRS{0}{\resWa}$.
Let
\begin{align*}
\wSl=\mu\inv(\mc{S}),
\end{align*}
the  {\em Slodowy variety}.
It is known 
\cite{Gin08}
that
$\wSl$
 is a smooth, connected symplectic submanifold
of $T^* \mc{B}$
and
the morphism
$\mu_{|\wSl}: \wSl\ra \mc{S}$ is a symplectic resolution
of singularities.
As explained in \cite{Gin08},
$\wSl$
can 
be also obtained by means of the Hamiltonian reduction:
Let 
\begin{align*}
\tilde{\mu}: T^*\mc{B}\ra \finm^*
\end{align*}
be the composition of $T^* \mc{B}\overset{\mu}{\ra }\Nil
\hookrightarrow\fing^*$
with the restriction map $\fing^*\ra \finm^*$.
Then $\tilde{\mu}$
is  the moment map for the $M$-action
and the one point $M$-orbit $f\in \finm^*$
is a regular value of $\tilde{\mu}$.
Let
\begin{align*}
\Sigma=\tilde\mu\inv(f).
\end{align*}
Then 
$\Sigma$ 
is a
reduced smooth connected submanifold of $T^* \mc{B}$,
and the action map
gives an isomorphism
\begin{align}
 M \times \wSl\isomap \Sigma,
\label{eq:freeness-of-Slodowy-vareity}
\end{align}
and we get that  
\begin{align}
\wSl\cong \Sigma/M.
\end{align}
By (\ref{eq:freeness-of-Slodowy-vareity}),
we obtain the jet scheme analogue
 \begin{align}
  M_{\infty}\times S_{\infty}\isomap \Sigma_{\infty}.
\label{eq:freeness-of-Slodowy-vareity-jets}
 \end{align}

Let
\begin{align*}
 V=
\wSl\cap \tilde{U}.
\end{align*}
Because $\tilde U$
is $M$-stable,
by restricting (\ref{eq:freeness-of-Slodowy-vareity})
and (\ref{eq:freeness-of-Slodowy-vareity-jets})
we obtain the isomorphisms
\begin{align}
& M\times V\isomap \Sigma\cap \tilde{U}
,\\
&
M_{\infty}\times V_{\infty}\isomap \Sigma_{\infty}\cap
 \tilde{U}_{\infty}=(\Sigma\cap \tilde{U})_{\infty}.
\label{eq:iso-jets}
\end{align}
Note that 
$\Sigma\cap \tilde{U}$ is an open dense subset
of $\Sigma$,
and 
$V$ is an open dense subset of $\tilde{S}$.

 \begin{Pro}\label{Pro:vanishing-of-Wakimoto}
We have $\BRS{i}{\resWa}=0$
 for $i\ne 0$,
$R_{\BRS{0}{\resWa}}\cong 
\C[V]$
and $\gr {\BRS{0}{\resWa}}\cong \C[V_{\infty}]$.
\end{Pro}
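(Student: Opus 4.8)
The plan is to compute $\BRS{\bullet}{\resWa}$ by exploiting the identification $\resWa\cong\D^{\ch}(\C^{\dim\finn})$ together with the description of $\resV\hookrightarrow\resWa$ in terms of the Springer resolution. First I would reduce the computation of the BRST cohomology to that of a semi-infinite Lie algebra cohomology: as recalled in the excerpt, $C(\resWa)=\resWa\*\betagamma{m}\*\bw{\semiinf+\bullet}$ computes $H^{\semiinf+\bullet}(\fing_{>0}[t,t\inv],\resWa\*\D^{\ch}(\C^m))$. Since $\resWa$ is (by construction, via the Wakimoto realization) built as a module over $\fing_{>0}[t,t\inv]$ that is ``co-free'' in the appropriate sense — more precisely, the Wakimoto module $\resWa$ restricted to the nilpotent part is induced/co-induced from a suitable subalgebra — I expect the semi-infinite cohomology to concentrate in degree $0$. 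Concretely, the geometry says: on the level of associated graded, $\gr\resWa=\C[\tilde U_\infty]$ with $\tilde U=\pi\inv(U)=T^*U$ an affine space $U\times\finn^*$, so the $\fing_{>0}[t,t\inv]$-action and the moment-map constraints imposed by $d$ become a Koszul-type complex with no higher cohomology.

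The key steps, in order, would be: (1) Exhibit a compatible (conformal-weight or Li) filtration on the complex $C(\resWa)$ so that its associated graded complex computes $\gr\BRS{\bullet}{\resWa}$ via a spectral sequence; this is the standard device used already in the proof of Proposition~\ref{Pro:conforma-weight-filtartion}. (2) Identify the associated graded complex with the (jet-scheme version of the) Koszul complex computing functions on the fiber of the moment map $\tilde\mu:T^*\mc{B}\to\finm^*$ over $f$, intersected with $\tilde U$. Here one uses (\ref{eq;associated-graded-of-cdo}), (\ref{eq:Wakimovo-vs-CDO}), and the fact that $d$ on the associated graded level becomes multiplication by the components of $\tilde\mu-f$ (Koszul differential) plus the Chevalley--Eilenberg differential for $\fing_{>0}$; since $f$ is a regular value of $\tilde\mu$ on $\tilde U$ (by (\ref{eq:iso-jets}) and smoothness of $\Sigma\cap\tilde U$), the Koszul complex is a resolution and the $\fing_{>0}$-cohomology piece also collapses because the $M$-action is free on $\Sigma\cap\tilde U$. (3) Conclude $H^0$ of the associated graded complex is $\C[(\Sigma\cap\tilde U)_\infty/M_\infty]=\C[V_\infty]$ by (\ref{eq:iso-jets}), and $H^i=0$ for $i\ne 0$. (4) Run the spectral sequence: vanishing of higher graded cohomology forces degeneration at $E_1$, giving $\BRS{i}{\resWa}=0$ for $i\ne0$, and $\gr\BRS{0}{\resWa}\cong\C[V_\infty]$, hence also $R_{\BRS{0}{\resWa}}\cong\C[V]$ by taking the top (Zhu) piece.

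The main obstacle I anticipate is step (2): showing precisely that the differential $d$ on the associated graded complex is the Koszul differential for the regular sequence cutting out $\Sigma\cap\tilde U$ together with the Chevalley--Eilenberg differential, and that these two pieces assemble into a single complex whose cohomology is computed by first taking $M_\infty$-invariants of a Koszul resolution. This requires a careful bookkeeping of how the $\beta\gamma$-system generators $a_i,a_i^*$ and the fermions interact with the Wakimoto screening operators, and an argument that regularity of $f$ as a value of $\tilde\mu$ on $\tilde U$ (not merely on all of $T^*\mc{B}$) survives passage to jet schemes — which is exactly the content of the isomorphism (\ref{eq:iso-jets}) and the smoothness of $\Sigma$. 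Once the associated graded complex is correctly identified, the remaining homological algebra is routine and parallels \cite[\S4]{Ara09b} and \cite{FreGai04}.
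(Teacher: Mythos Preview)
Your detailed plan (steps (1)--(4)) via the Li filtration is essentially what the paper does for the second and third assertions, but the paper proves the vanishing $\BRS{i}{\resWa}=0$ for $i\neq 0$ by a different device that you only hint at in your opening paragraph and then set aside. Namely, the paper uses the decomposition $d=d^{\st}+d^{\chi}$ from \cite[3.7]{Ara05}, which yields a spectral sequence with $d_0=d^{\st}$; it identifies $H^\bullet(C(\resWa),d^{\st})$ with the semi-infinite cohomology $H^{\semiinf+\bullet}(L\fing_{>0},\resWa\* S(\fing_{1/2}[t\inv]t\inv))$, and then invokes Voronov's theorem \cite[Theorem 2.1]{Vor93} --- using that the Wakimoto module $\resWa$ is by construction free over $\finn[t\inv]t\inv$ and cofree over $\finn[t]$ --- to kill all $i\neq 0$ already at $E_1$. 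Only after this does the paper turn to the Li filtration of $C(\resWa)$ and, via \cite[Theorem 4.3.3]{Ara09b} together with (\ref{eq:iso-jets}), identify $\gr\BRS{0}{\resWa}$ with $\C[V_\infty]$, exactly as in your steps (2)--(3). Note that the paper actually cites the vanishing result (proved first, via $d^{\st}/d^{\chi}$) as one of the inputs needed to argue that the Li-filtration spectral sequence converges (see the appeal to \cite[Proposition 4.4.3]{Ara09b}); your step (4) correctly observes that $E_1$ concentrated in degree $0$ forces degeneration, but you still owe a convergence argument for the abutment, and the cleanest way to supply it here is precisely the independent vanishing argument the paper gives. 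So the two approaches agree on the $\gr$ computation but differ on how vanishing is obtained: the paper's route via $d^{\st}/d^{\chi}$ and Voronov is more direct and sidesteps any delicacy about the Li-filtration spectral sequence, whereas your purely associated-graded route would need an extra justification of convergence before it closes.
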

\begin{proof}
 By \cite[3.7]{Ara05},
the differential $d$ decomposes as 
 $d=d^{st}+d^{\chi}$
with $(d^{st})^2=(d^{\chi})^2=\{d^{st},d^{\chi}\}=0$.
It follows that
 there is a 
spectral sequence
$E_r\Rightarrow 
\BRS{0}{\resWa}$ such that
$d_0=d^{st}$
and $d_1=d^{\chi}$.
By \cite[Remark 3.7.1]{Ara05}
we have
\begin{align}
H^{\bullet}(C(\resWa),d^{st})\cong  H^{\semiinf+\bullet}(L\fing_{>0},
\resWa\* S(\fing_{1/2}[t\inv]t\inv)
),
\label{eq:standard-cohomollgy1}
\end{align}where 
$L\fing_{>0}=\fing[t,t\inv]$
and
$S(\fing_{1/2}[t\inv]t\inv)$
is considered as
a 
$L\fing_{>0}$-module 
through  the identification
$S(\fing_{1/2}[t\inv]t\inv)\cong U(L\fing_{>0})/
U(L\fing_{>0})(L\fing_{\geq 1}+\fing_{1/2}[t])$.
Here,
$L\fing_{\geq 1}=\bigoplus_{j\geq 1}\fing_j[t,t\inv]$.

Because $\resWa$ is  free over $\finn[t\inv ]t\inv$
and cofree over $\finn[t]$ by construction,
it follows by
\cite[Theorem 2.1]{Vor93} that
$ H^{\semiinf+i}(L\fing_{>0},
\resWa\* S(\fing_{1/2}[t\inv]t\inv)
)
=0$ for $i\ne 0$.
Hence the spectral sequence collapses at $E_1=E_{\infty}$
and we get that
\begin{align}
 \BRS{i}{\resWa}\cong \begin{cases}
		  H^0(C(\resWa),d^{st})&\text{for $i=0$},\\0
&
\text{for $i\ne 0$.}
		 \end{cases}
\label{eq:standard-cohomollgy2}
\end{align}
This proves the first assertion.

Let $\gr C(\resWa)$ be the 
associated graded complex of 
$C(\resWa)$ with respect to the Li filtration of $C(\resWa)$.
Then
as in \cite[Theorem 4.3.3]{Ara09b}
we find that
\begin{align}
& H^i(\gr C(\resWa))\cong \begin{cases}
		      (\gr \resWa/I_{\infty}\gr \resWa )^{M_{\infty}}
\cong \C[V_{\infty}]
&
(i=0)\\ 0&(i\ne 0).
		     \end{cases}.
\label{eq:vanishing-associated-graded}
\end{align}
By 
(\ref{eq:vanishing-associated-graded}),
 (\ref{eq:standard-cohomollgy2})
and
\cite[Proposition 4.4.3]{Ara09b},
the spectral sequence associated with the Li filtration of $C(\resWa)$
converges to $\BRS{\bullet}{\resWa}$.
Hence we have
$
 \gr^K \BRS{0}{\resWa}
\cong \C[V_{\infty}]
$,
where
$\gr^K \BRS{0}{\resWa}$
is the associated graded vertex algebra
with respect to the filtration
$\{K^p \BRS{0}{\resWa}\}$ 
induced by the Li filtration of 
$C(\resWa)$.
As in the proof of Proposition \ref{Pro:description-of-restricted-W},
we see that
$\{K^p \BRS{0}{\resWa}\}$ 
coincides wit the Li filtration of $\BRS{0}{\resWa}$.
This proves the last assertion,
which restricts to the second assertion.

\end{proof}

\begin{proof}[Proof of Proposition \ref{Pro:injective restricted}]
Let
$\mu_{|V}
: V\ra S$
be the restriction
of the resolution $\mu_{|\wSl}:\wSl\ra \mc{S}$.
By Propositions \ref{Pro:description-of-restricted-W}
and
\ref{Pro:vanishing-of-Wakimoto},
\begin{align}
\mu^*_{|V}: \C[\mc{S}]\ra \C[V]
\label{eq:1}
\end{align}
can be identified with  the 
 homomorphism
$R_{\resW}\ra R_{\BRS{0}{\resWa}}$
induced by the vertex algebra homomorphism  $
\omega_\W:\resW\ra \BRS{0}{\resWa}$.
Thus,
by Propositions \ref{Pro:description-of-restricted-W}
and \ref{Pro:vanishing-of-Wakimoto},
it is sufficient to show that
(\ref{eq:1}) is injective.
But
(\ref{eq:1}) 
is the composition
of
$\mu^*: \C[\mc{S}]\isomap \Gamma(\wSl,\mc{O}_{\wSl})$
with
 the 
restriction map
$\Gamma(\wSl,\mc{O}_{\wSl})\ra 
\C[V]$.
Hence it is injective as required.
\end{proof}

\section{Proof of Theorems \ref{MainTh} and \ref{Th: main2}}
For $\lam\in \dual{\affh}_k$,
let $\M{\lam}$ be the 
Verma module with highest weight $\lam$,
$\M{\lam}^*$ its  contragredient     dual.

 \begin{Pro}\label{Pro:cocyclicity}
Suppose that
$\BRS{i}{\M{\lam}^*}=0$
for $i\ne 0$.
Then
  $\BRS{0}{\M{\lam}^*}$ is a
cocyclic $\Wg{k}$-module  with the cocyclic vector $v_{\lam}^*$,
where $v_{\lam}^*$
is
the image of  the cocyclic vector of $\M{\lam}^*$.
\end{Pro}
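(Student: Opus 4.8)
The strategy is to deduce cocyclicity of $\BRS{0}{\M{\lam}^*}$ from cyclicity of its contragredient dual. First, $\M{\lam}^*$ is cocyclic over $\affg$: its socle is the simple module $\Irr{\lam}$, which is essential in $\M{\lam}^*$, so every nonzero submodule contains the highest-weight vector generating $\Irr{\lam}$---this is the cocyclic vector of $\M{\lam}^*$, whose image in $\BRS{0}{\M{\lam}^*}$ is $v_{\lam}^*$. Equivalently, $\M{\lam}^{**}=\M{\lam}$ is cyclic, generated by its highest-weight vector $v_{\lam}$. More generally, for modules in a category on which contragredient duality $N\mapsto N^{*}$ is an exact involution with finite-dimensional graded components, a module $N$ is cocyclic with cocyclic vector $v$ (a vector spanning its graded component) if and only if $N^{*}$ is cyclic, generated by the dual functional $v^{*}$. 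So it suffices to prove that $\BRS{0}{\M{\lam}^*}^{*}$ is cyclic, generated by the functional dual to $v_{\lam}^*$.

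The first main step is a duality isomorphism $\BRS{i}{M}^{*}\cong\BRS{-i}{M^{*}}$ of $\Wg{k}$-modules, for the appropriate contragredient duality on $\Wg{k}$-modules and up to a possible conformal-weight shift. It comes from a perfect pairing $C(M)\times C(M^{*})\to\C$ on the complex $C(M)=M\*\betagamma{m}\*\bw{\semiinf+\bullet}$, under which the differential on one factor is adjoint to the differential on the other: one uses that $\betagamma{m}$ carries a contragredient pairing identifying it with itself and that the Clifford vertex superalgebra $\bw{\semiinf+\bullet}$ is self-dual up to the reversal $\bullet\mapsto -\bullet$ of the charge grading, the delicate point being that the $\chi$-twisted summand $d^{\chi}$ of the differential is self-adjoint for the correctly normalized pairing. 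Applying this with $M=\M{\lam}^*$ and using the hypothesis $\BRS{i}{\M{\lam}^*}=0$ for $i\ne0$, we get that $\BRS{i}{\M{\lam}}=0$ for $i\ne0$ and that $\BRS{0}{\M{\lam}^*}^{*}\cong\BRS{0}{\M{\lam}}$; unwinding the construction, the functional dual to $v_{\lam}^*$ goes to the image $[v_{\lam}]$ of the highest-weight vector of $\M{\lam}$.

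It remains to show that $\BRS{0}{\M{\lam}}$ is cyclic over $\Wg{k}$, generated by $[v_{\lam}]$---the affine, critical-level analogue of the fact that the Drinfeld--Sokolov reduction of a Verma module is the corresponding Verma module over the $W$-algebra, hence cyclic by construction. Granting the vanishing $\BRS{i}{\M{\lam}}=0$ ($i\ne0$) established above, this follows from the associated-graded methods already used in the paper: equip $C(\M{\lam})$ with the Li filtration; by \cite[Proposition 4.4.3]{Ara09b} the resulting spectral sequence degenerates and identifies $\gr\BRS{0}{\M{\lam}}$ with the cohomology of the associated graded complex, which is generated over $\gr\Wg{k}\cong\C[\Sl_{\infty}]$ by the symbol of $[v_{\lam}]$ because $\M{\lam}$ is a cyclic $\affg$-module; cyclicity then lifts to $\BRS{0}{\M{\lam}}$. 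Dualizing back via the isomorphism of the previous step gives the Proposition.

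The main obstacle is the duality isomorphism of the second step: making the pairing on $C(M)\times C(M^{*})$ explicit enough to verify adjointness of the full differential $d=d^{st}+d^{\chi}$, with the correct shift, is the one place requiring a genuine computation rather than a citation. A secondary point is the cyclicity of the reduced Verma module; note that the naive alternative---lifting a $\Wg{k}$-submodule of $\BRS{0}{\M{\lam}^*}$ to an $\affg$-submodule of $\M{\lam}^*$---is unavailable because the reduction functor is not faithful, which is exactly why the argument passes through contragredient duality.
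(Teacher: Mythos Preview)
Your approach differs substantially from the paper's, which works directly inside $C(\M{\lam}^*)$: following \cite{Ara04,Ara05,Ara07} one constructs an explicit subcomplex $C'\subset C(\M{\lam}^*)$ that is already a $\Wg{k}$-submodule, whose $H^0$ is manifestly a dual Verma-type module over $\Wg{k}$ and hence cocyclic by construction; the vanishing hypothesis is only used to force the character of $H^0(C')$ to match that of $\BRS{0}{\M{\lam}^*}$, turning the inclusion $H^0(C')\hookrightarrow \BRS{0}{\M{\lam}^*}$ into an isomorphism.

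The genuine gap in your argument is the claimed self-duality $\BRS{i}{M}^*\cong\BRS{-i}{M^*}$. Under any natural pairing between $\bw{\semiinf+\bullet}$ and itself, the adjoint of the ghost annihilation operator $\psi_\alpha^*$ is the ghost creation operator $\psi_\alpha$, not $\psi_\alpha^*$ again; consequently the adjoint of $d^{\chi}=\sum_\alpha\chi(e_\alpha)\int\psi_\alpha^*(z)\,dz$ is $\sum_\alpha\chi(e_\alpha)\int\psi_\alpha(z)\,dz$, which is \emph{not} $d^\chi$. What contragredient duality actually produces is an isomorphism $H_{f,+}^{\,i}(M)^*\cong H_{f,-}^{-i}(M^*)$ between the ``$+$'' reduction used in this paper and the \emph{opposite} ``$-$'' reduction of \cite{Ara04,Ara07}; these are different functors. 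So your step two does not yield $\BRS{0}{\M{\lam}^*}^*\cong\BRS{0}{\M{\lam}}$ but rather $\BRS{0}{\M{\lam}^*}^*\cong H^0_{f,-}(\M{\lam})$.

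One could try to salvage the strategy by proving instead that $H^0_{f,-}(\M{\lam})$ is cyclic. That statement is true, but its proof in \cite{Ara04,Ara07} goes precisely through the explicit subcomplex construction that the paper invokes here; so the detour through duality does not avoid the key input, it merely relocates it. Your step three is also shaky as written: the Li filtration is a vertex-algebra filtration, and the cited \cite[Proposition~4.4.3]{Ara09b} concerns $G$-integrable modules, whereas $\M{\lam}$ is not integrable; one cannot read off cyclicity of $\BRS{0}{\M{\lam}}$ from cyclicity of $\M{\lam}$ alone without the finer structural information that the subcomplex $C'$ provides.
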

\begin{proof}[Proof (outline)]
 By the argument of \cite[\S 6]{Ara04},
\cite[\S 7]{Ara05}, \cite[\S 7]{Ara07},
we can construct a subcomplex $C'$
of $C(\M{\lam}^*)$
with the following properties:
\begin{enumerate}
 \item $H^i(C')=0$ for $i\ne 0$;
\item  $C'$ is a $\Wg{k}$-submodule of $C(\M{\lam}^*)$
containing $v_{\lam}^*$,
and moreover,
$H^0(C')$ is a cocycolic $\Wg{k}$-module
with the cocyclic vector $v_{\lam}^*$:
\item The character of $H^0(C')$ coincides with
the character of  $\BRS{0}{\M{\lam}^*}$.
\end{enumerate}

Because $H^0(C')$ is cocyclic 
the above property (iii) forces that 
the map 
$H^0(C')\ra \BRS{0}{\M{\lam}^*}$ is an injection.
But 
$H^0(C')$ and  $ \BRS{0}{\M{\lam}^*}$  
have the same character.
Therefore it must be an isomorphism.
\end{proof}

Let $\Delta_+$
be the
set of positive roots of $\fing$,
$W$  the Weyl group of $\fing$,
$\rho=\sum_{\alpha\in \Delta_+}\alpha/2$,
$\rho\che=\sum_{\alpha\in \Delta_+}\alpha\che/2$.
Denote by
 $\wh{\Delta}^{re}_+$  the set of positive real roots of $\bigaffg$.
The set $\Delta_+$ is naturally considered as a subset
of $\wh{\Delta}_+^{re}$.
Let
$\wt{W}=W\ltimes
P\che
$
the extended affine Weyl group of $\bigaffg$,
where
$P\che\subset \finh$
is the set of coweights of $\fing$.
We denote by $t_{\mu}$
the element of $\wt{W}$
corresponding $\mu\in P\che$.
Set 
\begin{align*}
\affP^+=\{\lam\in \dual{\affh}_k;
\lam(\alpha\che)\in \Z_{\geq 0}
\text{ for all }\alpha\in \Delta_+\}.
\end{align*}
\begin{Pro}\label{Pro:Wakimoto-vs-Verma}
Suppose that
$k+h\che\not \in \Q_{>0}$.
Then,
for  $\lam\in \affP^+$,
$\M{\lam}^*$
is free over 
$\finn[t\inv]t\inv$.
\end{Pro}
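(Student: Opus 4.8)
The tool is the free-field realization. Every Wakimoto module $\Wak{\mu}$ is, as a vector space, a tensor product of the polynomial algebra $\C[\finn^*[t\inv]t\inv]$ — on which $\finn[t\inv]t\inv$ acts through the ``$\gamma$''-fields, i.e.\ by multiplication operators — with a Fock module for the Heisenberg algebra attached to $\finh$; in particular $\Wak{\mu}$ is free over $U(\finn[t\inv]t\inv)$, and this is exactly how the freeness of $\resWa\cong\D^{\ch}(\C^{\dim\finn})$ over $\finn[t\inv]t\inv$ was used in the proof of Proposition~\ref{Pro:vanishing-of-Wakimoto}. The plan is therefore to transport this freeness to $\M{\lambda}^*$; the title ``Wakimoto vs.\ Verma'' of the proposition points to the cleanest route, namely to prove $\M{\lambda}^*\cong\Wak{\lambda}$ as $\affg$-modules for $\lambda\in\affP^+$ when $k+h\che\notin\Q_{>0}$. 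Both sides are highest weight modules of highest weight $\lambda$ with the same character (that of $\M{\lambda}$), so it is enough to produce a nonzero homomorphism between them and to argue it is an isomorphism; since $\M{\lambda}^*$ has simple socle $\Irr{\lambda}$ and $\Wak{\lambda}$ carries a filtration by costandard modules $\nabla(\mu)=\M{\mu}^*$ (a dual-Verma flag), the hypothesis should force that filtration to reduce, for $\lambda$ at the top of its block, to the single layer $\nabla(\lambda)=\M{\lambda}^*$.

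If an outright isomorphism is unavailable, it suffices to construct a \emph{finite right} resolution
\begin{align*}
0\to \M{\lambda}^*\to \Wak{\lambda}\to \bigoplus_{\ell(w)=1}\Wak{w\cdot\lambda}\to\cdots\to \Wak{w_{0}\cdot\lambda}\to 0
\end{align*}
by Wakimoto modules indexed by the (finite) Weyl group $W$ of $\fing$ — a Wakimoto-module incarnation of the dual BGG resolution of the costandard module $\M{\lambda}^*$. Such a resolution does the job: writing $R=U(\finn[t\inv]t\inv)$ and splitting off terms one at a time from the right, the surjection onto the free module $\Wak{w_{0}\cdot\lambda}$ splits over $R$, so its kernel is a direct summand of a free $R$-module, hence projective; since all modules in sight are highest weight modules whose weights lie in $\lambda$ minus the positive root cone, with finite-dimensional weight spaces, and since $U(\finn[t\inv]t\inv)_{+}$ strictly lowers the weight, the graded Nakayama lemma applies and such a projective $R$-module is free; iterating to the left exhibits $\M{\lambda}^*$ itself as a direct summand of the free module $\Wak{\lambda}$, hence free over $R$. (The direction of the resolution is essential: a left resolution by free modules would only show $\M{\lambda}^*$ is a quotient of a free module.) For completeness, the complementary assertion that $\M{\lambda}^*$ is cofree over $\finn[t]$ — the companion input one feeds, together with this freeness, into a Feigin/Voronov-type vanishing argument as in Proposition~\ref{Pro:vanishing-of-Wakimoto} — needs no hypothesis: $\M{\lambda}$ is free (of rank one) over $\affn_-\supset\finn[t\inv]t\inv$ by the PBW theorem, so its contragredient dual $\M{\lambda}^*$ is cofree over $\omega(\affn_-)=\affn_+\supset\finn[t]$, where $\omega$ is the Chevalley involution of $\affg$.

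The substance is thus the construction of the isomorphism $\M{\lambda}^*\cong\Wak{\lambda}$, or of the resolution above, and this is where $k+h\che\notin\Q_{>0}$ is used; I expect this to be the main obstacle. The point should be that $k+h\che\notin\Q_{>0}$ forces $\lambda\in\affP^+$ to sit at the top of a block of affine category $\BGG$ whose linkage combinatorics is governed by the finite Weyl group $W$ in the same way as for the relevant block of the finite-dimensional category $\BGG$ of $\fing$, so that the Feigin--Frenkel structure theory of Wakimoto modules — their (dual-)Verma flags and the associated Wakimoto resolutions — applies and collapses the relevant filtration onto $\M{\lambda}^*$. I would carry this out by invoking, and in the general-level setting of this paper adapting, the Feigin--Frenkel/Frenkel theory of Wakimoto modules together with the standard description of $\lambda$-integral affine reflections; granting that structural input, the homological step of the previous paragraph is routine.
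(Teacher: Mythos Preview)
Your strategy is the reverse of the paper's. The paper does \emph{not} go through Wakimoto modules at all: it checks directly that under the hypotheses one has $\langle\lambda+\widehat\rho,\alpha\che\rangle\notin\N$ for every $\alpha\in\wh{\Delta}^{re}_+$ with $\bar\alpha\in-\Delta_+$, hence for every $\alpha\in\wh{\Delta}^{re}_+\cap t_{-n\rho\che}(-\wh{\Delta}^{re}_+)$ with $n\in\N$; then \cite[Theorem~3.1]{Ara04} gives cofreeness of $\M{\lambda}$ over the corresponding finite-dimensional pieces of $\finn_-[t]t$, and dualizing and passing to the direct limit yields freeness of $\M{\lambda}^*$ over $\finn[t\inv]t\inv$. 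The isomorphism $\M{\lambda}^*\cong\Wak{\lambda}$ appears only \emph{afterwards}, in the Remark following the proposition, as a \emph{consequence} of the freeness just established (via the cocyclicity argument of Proposition~\ref{Pro:Wakimoto is Verma}). So the logical flow in the paper is exactly opposite to what you propose.

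Your proposal therefore has a genuine gap, and you name it yourself: you have not constructed the isomorphism $\M{\lambda}^*\cong\Wak{\lambda}$ nor the Wakimoto resolution of $\M{\lambda}^*$, and you offer no independent argument for either. The assertion that $\Wak{\lambda}$ ``carries a filtration by costandard modules $\nabla(\mu)=\M{\mu}^*$'' is not a standard fact and would itself require proof; the Feigin--Frenkel theory supplies screening-operator maps between Wakimoto modules and, in favorable cases, Wakimoto resolutions of \emph{simple} modules, not dual-Verma flags on a single $\Wak{\lambda}$. Without that structural input your homological splitting argument never gets started. In effect you have traded a short root-combinatorics computation plus one citation for a structure theorem on Wakimoto modules that is at least as hard as the target statement and which, in the paper's own logic, is downstream of it.
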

\begin{proof}
By the assumption
\begin{align}
\bra \lam+\widehat \rho,\alpha\che\ket \not \in \N
\quad \text{for $\alpha\in \wh \Delta^{re}_+$
such that $\bar \alpha\in -\Delta_+$},
\end{align}
where $\widehat{\rho}=\rho+h\che \Lam_0$,
Hence 
$\bra \lam+\widehat \rho,\alpha\che\ket \not \in \N$
for all $\alpha\in 
\wh{\Delta}^{re}_+ \cap t_{-n\rho\che}
(-\wh{\Delta}^{re}_+)$,  $n\in \N$.
By
 \cite[Theorem 3.1]{Ara04},
this implies that
$\M{\lam}$ is 
cofree over the subalgebra 
$\bigoplus\limits_{\alpha\in \wh{\Delta}^{re}_+ 
\cap t_{-n\rho\che}(-\wh{\Delta}^{re}_+)}\affg_{\alpha}\subset
 \finn_-[t]t$,
or equivalently,
$\M{\lam}^*$ is free over 
$\bigoplus\limits_{\alpha\in (-\wh{\Delta}^{re}_+ )
\cap t_{n\rho\che}(\wh{\Delta}^{re}_+)}\affg_{\alpha}\subset
 \finn[t\inv]t\inv$.
Here $\affg_{\alpha}$
is the root space of $\affg$ of root $\alpha$.
Now we have 
$\finn[t\inv]t\inv=\lim\limits_{\ra\atop n}
\bigoplus\limits_{\alpha\in -\wh{\Delta}^{re}_+
\cap t_{n\rho\che}(\wh{\Delta}^{re}_+)}\affg_{\alpha}$.
Therefore
$\M{\lam}^*$ is free over $\finn[t\inv]t\inv$ as required.
\end{proof}

\begin{Rem}
 Proposition \ref{Pro:Wakimoto-vs-Verma} implies that
$\M{\lam}^*$
with $\lam\in \affP^+$,
$k+h\che\not\in \Q_{\geq 0}$,
is isomorphic to
the Wakimoto module 
$\Wak{\lam}$
with highest weight $\lam$,
see the proof of Proposition \ref{Pro:Wakimoto is Verma}.
\end{Rem}

\begin{Pro}\label{Pro:cocyclicity-dual-Verma}
 Suppose that $k+h\che\not \in \Q_{>0}$
and let
 $\lam \in \affP^+$.
Then
we have
$\BRS{i}{\M{\lam}^*}=0$  for $i\ne 0$
and $\BRS{0}{\M{\lam}^*}$ is cocyclic $\Wg{k}$-module
with the cocyclic vector $v_\lam^*$.
\end{Pro}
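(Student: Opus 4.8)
The plan is to deduce Proposition~\ref{Pro:cocyclicity-dual-Verma} from the vanishing result for restricted Wakimoto modules (Proposition~\ref{Pro:vanishing-of-Wakimoto}) together with Proposition~\ref{Pro:Wakimoto-vs-Verma} and the cocyclicity criterion of Proposition~\ref{Pro:cocyclicity}. Granting the cocyclicity statement of Proposition~\ref{Pro:cocyclicity}, the only substantive thing to prove is the vanishing $\BRS{i}{\M{\lam}^*}=0$ for $i\ne 0$; the cocyclicity of $\BRS{0}{\M{\lam}^*}$ then follows immediately by feeding this vanishing into Proposition~\ref{Pro:cocyclicity}. So I would focus the argument on the vanishing.

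First I would observe that, by Proposition~\ref{Pro:Wakimoto-vs-Verma} (applicable since $k+h\che\notin\Q_{>0}$ and $\lam\in\affP^+$), the dual Verma module $\M{\lam}^*$ is free over $\finn[t\inv]t\inv$. This is exactly the freeness hypothesis that drives the Voronov-type vanishing argument used in the proof of Proposition~\ref{Pro:vanishing-of-Wakimoto}. I would therefore run the same two-step spectral sequence argument on $C(\M{\lam}^*)$: decompose the differential as $d=d^{\st}+d^{\chi}$ following \cite[3.7]{Ara05}, giving a spectral sequence with $d_0=d^{\st}$ whose $E_1$-term is the standard semi-infinite cohomology $H^{\semiinf+\bullet}(L\fing_{>0},\M{\lam}^*\* S(\fing_{1/2}[t\inv]t\inv))$. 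Since $\M{\lam}^*$ is free over $\finn[t\inv]t\inv$ and cofree over $\finn[t]$ (the cofreeness over $\finn[t]$ coming from its being a dual Verma, i.e.\ cofree over all of $\finn_-[t]t$ and in particular over $\finn[t]$ — more precisely one wants cofreeness over $\fing_{\ge 1}[t]+\fing_{1/2}[t]$, which also holds for a dual Verma), Voronov's theorem \cite[Theorem 2.1]{Vor93} forces the standard cohomology to be concentrated in degree $0$. Hence the spectral sequence collapses at $E_1=E_\infty$ and $\BRS{i}{\M{\lam}^*}=0$ for $i\ne 0$, $\BRS{0}{\M{\lam}^*}\cong H^0(C(\M{\lam}^*),d^{\st})$.

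With the vanishing in hand, I would invoke Proposition~\ref{Pro:cocyclicity}: its hypothesis $\BRS{i}{\M{\lam}^*}=0$ for $i\ne 0$ is now verified, so $\BRS{0}{\M{\lam}^*}$ is a cocyclic $\Wg{k}$-module with cocyclic vector $v_\lam^*$, the image of the cocyclic vector of $\M{\lam}^*$. This completes the proof.

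The main obstacle I anticipate is verifying the precise (co)freeness hypotheses needed to apply Voronov's theorem in the generality of an arbitrary $\lam\in\affP^+$ rather than the vacuum-type module $\resWa$ treated in Proposition~\ref{Pro:vanishing-of-Wakimoto}: one must check that $\M{\lam}^*$ is cofree over $\finn[t]$ (equivalently that $\M{\lam}$ is free over $\finn_-[t\inv]t\inv$, which is standard for Verma modules) and genuinely free over $\finn[t\inv]t\inv$, the latter being exactly the content of Proposition~\ref{Pro:Wakimoto-vs-Verma} and the reason the hypothesis $k+h\che\notin\Q_{>0}$ is imposed. Once these two module-theoretic inputs are pinned down, the rest is a mechanical repetition of the spectral-sequence bookkeeping already carried out for $\resWa$.
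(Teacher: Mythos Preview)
Your proposal is correct and matches the paper's proof exactly: establish freeness of $\M{\lam}^*$ over $\finn[t\inv]t\inv$ via Proposition~\ref{Pro:Wakimoto-vs-Verma}, note cofreeness over $\finn[t]$ by definition of the contragredient dual of a Verma module, rerun the spectral-sequence/Voronov argument from the proof of Proposition~\ref{Pro:vanishing-of-Wakimoto} to obtain the vanishing, and then invoke Proposition~\ref{Pro:cocyclicity} for cocyclicity. One small slip in your parenthetical: the dual Verma is cofree over the positive affine nilradical $\finn\oplus\fing[t]t$ (hence over $\finn[t]$), not over ``$\finn_-[t]t$''; your stated conclusion is nonetheless the right one.
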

\begin{proof}

$\M{\lam}^*$ is 
cofree over  $\finn[t]$ by definition,
and  
 is free over $\finn[t\inv]t\inv$
by Proposition \ref{Pro:Wakimoto-vs-Verma}.
Hence one can apply the proof of Proposition  \ref{Pro:vanishing-of-Wakimoto}
to obtain the vanishing assertion.
The rest follows from Proposition \ref{Pro:cocyclicity}.
\end{proof}

Now let $k=\cri$.
For $\lam\in \dual{\affh}_{\cri}$,
let $\M{\lam}^{\res}$ be the 
{\em restricted Verma module} \cite{FeiFre90,AraFie08}
with highest weight $\lam$,
$(\M{\lam}^{\res})^*$ its  contragredient  dual.
The module $\M{\lam}^{\res}$ is defined as follows:
Let 
$p^{(1)},\dots ,p^{(l)}$
be  a set of homogeneous generators of $\mf{z}(\affg)$
as a differential algebra (which is the same as a commutative vertex algebra),
so that $R_{\mf{z}(\affg)}=\C[\bar p^{(1)},\dots,\bar p^{(l)}]$,
where $\bar p^{(i)}$ is  the image of $p^{(i)}$ in
$R_{\mf{z}(\affg)}=\C[\fing^*]^G$.
Let $Y(p_i,z)=\sum_{n\in \Z}p^{(i)}_n
 z^{-n-\Delta_i}$
be the field corresponding to $p^{(i)}$,
where $\Delta_i$ is the degree of the polynomial $\bar p^{(i)}$.
Then
\begin{align*}
 Z_{\pm}=\C[p^{(i)}_n;
i=1,\dots,l, \pm n>0]
\end{align*}
can be regarded as a polynomial ring,
which acts on 
any $\Vg{\cri}$-module.
According to Feigin and Frenkel \cite{FeiFre90,Fre07},
$Z_-$ acts on $\M{\lam}$  freely.
By definition,
\begin{align*}
\M{\lam}^{\res}=\M{\lam}/Z_-^* \M{\lam},
\end{align*}
where $Z_-^*$ is the argumentation ideal of $Z_-$.
Dually,
\begin{align*}
(\M{\lam}^{\res})^*=\{m\in \M{\lam}^*;
Z_+^* m=0\},
\end{align*}where
 $Z_+^*$ is the argumentation ideal of $Z_+$.

\begin{Pro}\label{Pro cocyclisity}
Let $\lam\in \widehat{P}_{\cri}^+$. 
\begin{enumerate}
 \item The embedding
$(\M{\lam}^{\res})^*\hookrightarrow 
\M{\lam}^*$ induces an embedding
$\BRS{0}{(\M{\lam}^{\res})^*}\hookrightarrow 
\BRS{0}{\M{\lam}^*}$.
\item
The
$\Wg{\cri}$-module
$\BRS{0}{(\M{\lam}^{\res})^*} $
is a cocyclic 
 with the cocyclic  vector $v_{\lam}^*$.
\end{enumerate}
\end{Pro}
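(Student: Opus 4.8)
The plan is to deduce (ii) from (i) and then to prove (i) by filtering $\M{\lam}^*$ by the powers of the augmentation ideal $Z_+^*$. Since $\cri+h\che=0\notin\Q_{>0}$, Proposition~\ref{Pro:cocyclicity-dual-Verma} applies at the critical level and gives that $\BRS{i}{\M{\lam}^*}=0$ for $i\neq 0$ and that $\BRS{0}{\M{\lam}^*}$ is cocyclic over $\Wg{\cri}$ with cocyclic vector $v_\lam^*$, i.e.\ every nonzero $\Wg{\cri}$-submodule of $\BRS{0}{\M{\lam}^*}$ contains $v_\lam^*$. The cocyclic vector of $(\M{\lam}^{\res})^*$ — the functional dual to the highest weight vector — lies in $(\M{\lam}^{\res})^*=\{m\in\M{\lam}^*:Z_+^*m=0\}$ and maps under the inclusion $(\M{\lam}^{\res})^*\hookrightarrow\M{\lam}^*$ to the cocyclic vector of $\M{\lam}^*$; hence, once (i) is known, the class $v_\lam^*\in\BRS{0}{(\M{\lam}^{\res})^*}$ it defines is nonzero and is carried to $v_\lam^*\in\BRS{0}{\M{\lam}^*}$. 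Then any nonzero $\Wg{\cri}$-submodule of $\BRS{0}{(\M{\lam}^{\res})^*}$ is, via the embedding of (i), a nonzero submodule of $\BRS{0}{\M{\lam}^*}$, so it contains $v_\lam^*$; this is exactly (ii).

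The heart of (i) is the vanishing
\[
\BRS{i}{(\M{\lam}^{\res})^*}=0\qquad (i\neq 0).
\]
For this I would check that $(\M{\lam}^{\res})^*$ is cofree over $\finn[t]$ and free over $\finn[t\inv]t\inv$; then the argument of Proposition~\ref{Pro:vanishing-of-Wakimoto} applies without change — decompose $d=d^{st}+d^{\chi}$, run the associated spectral sequence, and use Voronov's theorem \cite{Vor93} to kill $H^{\semiinf+i}(L\fing_{>0},(\M{\lam}^{\res})^*\* S(\fing_{1/2}[t\inv]t\inv))$ for $i\neq 0$. For $\lam\in\widehat{P}_{\cri}^+$ the required free/cofree properties hold because $(\M{\lam}^{\res})^*$ is isomorphic to the restricted Wakimoto module $\Wak{\lam}^{\res}$, which enjoys them by its very construction, exactly as $\resWa$ does in the proof of Proposition~\ref{Pro:vanishing-of-Wakimoto}. (One can also argue directly: $\mf{z}(\affg)$ is central in $\Vg{\cri}$, so $Z_-$ commutes with $\affg$; the cofreeness of $\M{\lam}^*$ over $\finn[t]$ and its freeness over $\finn[t\inv]t\inv$ — the latter by Proposition~\ref{Pro:Wakimoto-vs-Verma}, available since $\cri+h\che=0\notin\Q_{>0}$ — are statements about modules over $U(\finn[t])\* Z_-$ resp.\ $U(\finn[t\inv]t\inv)\* Z_-$, and, coinduction from a graded Lie subalgebra being exact, they pass to $\M{\lam}^{\res}=\M{\lam}/Z_-^*\M{\lam}$ and hence to $(\M{\lam}^{\res})^*$.)

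To finish (i), consider the increasing, $\affg$-stable filtration $\Gamma_p=\{m\in\M{\lam}^*:(Z_+^*)^{p+1}m=0\}$ of $\M{\lam}^*$, so that $\Gamma_0=(\M{\lam}^{\res})^*$; it is exhaustive because $Z_+^*$ strictly lowers conformal weight while the conformal weights occurring in $\M{\lam}^*$ are bounded below. Since $\M{\lam}$ is free over $Z_-$ — equivalently $\M{\lam}^*$ is cofree over $Z_+$ — the evaluation pairing is an $\affg$-module isomorphism
\[
\Gamma_p/\Gamma_{p-1}\isomap \Hom_{\C}\big((Z_+^*)^p/(Z_+^*)^{p+1},\,(\M{\lam}^{\res})^*\big),
\]
so every $\Gamma_p/\Gamma_{p-1}$ is a finite direct sum of degree shifts of $(\M{\lam}^{\res})^*$. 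Applying $C(-)$ to $0\to\Gamma_{p-1}\to\Gamma_p\to\Gamma_p/\Gamma_{p-1}\to 0$ and using the vanishing above, the long exact cohomology sequences yield, by induction on $p$, that $\BRS{i}{\Gamma_p}=0$ for $i\neq 0$ and that $0\to\BRS{0}{\Gamma_{p-1}}\to\BRS{0}{\Gamma_p}\to\BRS{0}{\Gamma_p/\Gamma_{p-1}}\to 0$ is exact; in particular $\BRS{0}{\Gamma_0}\hookrightarrow\BRS{0}{\Gamma_p}$ for all $p$. As $C(-)$ is tensoring with a fixed object, it commutes with the filtered colimit $\M{\lam}^*=\varinjlim_p\Gamma_p$, and cohomology commutes with filtered colimits; hence $\BRS{0}{(\M{\lam}^{\res})^*}=\BRS{0}{\Gamma_0}\hookrightarrow\varinjlim_p\BRS{0}{\Gamma_p}=\BRS{0}{\M{\lam}^*}$, which is (i). The only genuinely new input is the free/cofree property of $(\M{\lam}^{\res})^*$ used in the vanishing step; once that is in hand, everything else is formal, relying on the $Z_+^*$-torsion filtration together with the spectral-sequence and Voronov-type vanishing arguments already developed for Propositions~\ref{Pro:description-of-restricted-W} and \ref{Pro:vanishing-of-Wakimoto}.
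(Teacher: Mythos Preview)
Your proof is correct and follows essentially the same route as the paper: you filter $\M{\lam}^*$ by the $(Z_+^*)$-torsion, use the freeness of $\M{\lam}$ over $Z_-$ to identify the associated graded pieces with copies of $(\M{\lam}^{\res})^*$, establish the vanishing $\BRS{i}{(\M{\lam}^{\res})^*}=0$ for $i\neq 0$ via the free/cofree properties inherited from the restricted Wakimoto module (Proposition~\ref{Pro:Wakimoto is Verma}), and then deduce (ii) from (i) together with Proposition~\ref{Pro:cocyclicity-dual-Verma}. The only cosmetic difference is that the paper packages the inductive long-exact-sequence argument as a single spectral sequence collapsing at $E_1=E_\infty$, from which it reads off directly that $\BRS{0}{(\M{\lam}^{\res})^*}\cong\{c\in\BRS{0}{\M{\lam}^*}:Z_+^*c=0\}$.
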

\begin{proof}
(1)
Let 
$\Gamma^0 \M{\lam}^*=0$,
$\Gamma^p \M{\lam}^*=\{m\in \M{\lam}^*; (Z_+^*)^ pm=0\}$ for $p\geq 1$.
Then 
$\{\Gamma^p \M{\lam}^*\}$ defines an increasing filtration of
$\M{\lam}^*$ as a $\affg$-module,
and 
the freeness of $\M{\lam}$ over $Z_-$ implies that
 \begin{align*}
  \gr^{\Gamma}\M{\lam}^*\cong (\M{\lam}^{\res})^*\* D(Z_+),
 \end{align*}
as $\Vg{k}\* Z_+$-modules,
where $D(Z_+)$ is the restricted dual of $Z_+$.

Now 
$(\M{\lam}^{\res})^*$ is free over $\finn[t\inv]t\inv$
and cofree over $\finn[t]$
by Proposition \ref{Pro:Wakimoto is Verma}.
Thus
we see as in the proof of Proposition \ref{Pro:vanishing-of-Wakimoto}
that
$\BRS{i}{(\M{\lam}^{\res})^*}=0$ for $i\ne 0$.
This shows that
the spectral sequence corresponding to 
the filtration ${\Gamma^p\M{\lam}^*}$
collapses at $E_1=E_{\infty}$
and we get that
\begin{align*}
\gr^{\Gamma} \BRS{0}{\M{\lam}^*}\cong \BRS{0}{(\M{\lam}^{\res})^*}\*
D(Z_+).
\end{align*}
In particular,
\begin{align*}
 \BRS{0}{(\M{\lam}^{\res})^*}\cong 
\Gamma^1 \BRS{0}{\M{\lam}^*}=\{
c\in \BRS{0}{\M{\lam}^*}; Z_+^* c=0\}.
\end{align*}
This proves (1).
(2) follows from (1)
and
 Proposition \ref{Pro:cocyclicity-dual-Verma}.
\end{proof}

\begin{Pro}\label{Pro:Wakimoto is Verma}
For $\lam \in \widehat{P}_{\cri}^+$,
the restricted Wakimoto module
$\Wak{\lam}^{\res}$ is isomorphic to $ (\M{\lam}^{\res})^*$.
\end{Pro}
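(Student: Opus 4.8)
The plan is to identify both $\Wak{\lam}^{\res}$ and $(\M{\lam}^{\res})^*$ as the unique (up to scalar) $\affg$-module structure on a fixed vector space with a prescribed highest weight vector on which $\finn_-[t]t$ acts freely (equivalently $\finn[t\inv]t\inv$ acts cofreely). First I would recall the construction of the Wakimoto module $\Wak{\lam}$ at a generic level $k$ with $k+h\che\notin\Q_{>0}$: by the Remark following Proposition \ref{Pro:Wakimoto-vs-Verma}, the freeness of $\M{\lam}^*$ over $\finn[t\inv]t\inv$ established in Proposition \ref{Pro:Wakimoto-vs-Verma} forces $\M{\lam}^*\cong \Wak{\lam}$ for $\lam\in\affP^+$. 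Indeed, the Wakimoto module $\Wak{\lam}$ is by construction free over $\finn[t\inv]t\inv$ and cofree over $\finn[t]$, and there is a canonical nonzero $\affg$-homomorphism between $\M{\lam}^*$ and $\Wak{\lam}$ matching highest weight vectors; comparing characters (both are $\finn[t\inv]t\inv$-free on a one-dimensional space of the same weight, tensored with the contribution of $\finh[t,t\inv]\+\fint$) shows this homomorphism is an isomorphism.

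Next I would pass to the critical level by a degeneration argument. The family of Wakimoto modules $\Wak{\lam}$ is defined over a base including $k=\cri$, and the restricted Wakimoto module $\Wak{\lam}^{\res}$ is, by definition \cite{Fre05}, the quotient $\Wak{\lam}/Z_-^*\Wak{\lam}$ where $Z_-$ acts freely on $\Wak{\lam}$ (the freeness of $Z_-$ on Wakimoto modules is part of the Feigin--Frenkel picture, since Wakimoto modules realize the action of $\mf z(\affg)$ geometrically). Dually, $(\Wak{\lam}^{\res})^*=\{m\in\Wak{\lam}^*; Z_+^*m=0\}$. So it suffices to show $\Wak{\lam}^*\cong \M{\lam}^*$ as $\Vg{\cri}$-modules compatibly with the $Z_+$-action and with highest weight vectors; then taking $Z_+^*$-invariants on both sides gives $(\Wak{\lam}^{\res})^*\cong(\M{\lam}^{\res})^*$, and dualizing the definition of $\Wak{\lam}^{\res}$ identifies $(\Wak{\lam}^{\res})^*$ with what was just computed. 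Concretely, I would establish $\M{\lam}\cong \Wak{\lam}$ at $k=\cri$ directly: both are $U(\finn_-[t]t)$-cofree (for $\M{\lam}$ this is automatic, for $\Wak{\lam}$ it is the construction), both carry the same $\affh[t,t\inv]$-character, and there is a nonzero map $\M{\lam}\to\Wak{\lam}$ sending highest weight vector to highest weight vector because $\lam\in\affP^+$ forces the highest weight vector of $\Wak{\lam}$ to be annihilated by $\finn_-[t]t\cap(\text{positive part})$ — here I would invoke $\bra\lam+\affrho,\alpha\che\ket\notin\N$ for the relevant real roots exactly as in Proposition \ref{Pro:Wakimoto-vs-Verma}, which at $k=\cri$ reads $\bra\lam+\rho,\alpha\che\ket\notin\N$ and holds for $\lam\in\affP^+$ and $\bar\alpha\in-\Delta_+$. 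A character/cofreeness comparison upgrades this map to an isomorphism.

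The main obstacle is the critical-level subtlety: Proposition \ref{Pro:Wakimoto-vs-Verma} is stated only for $k+h\che\notin\Q_{>0}$, so I cannot directly quote $\M{\lam}^*\cong\Wak{\lam}$ at $k=\cri$ and must instead run the cofreeness-over-$\finn_-[t]t$ plus character-matching argument by hand at the critical level, checking that the positivity condition on $\bra\lam+\rho,\alpha\che\ket$ still guarantees the non-vanishing of the comparison homomorphism. The remaining steps — tracking the $Z_\pm$-actions through the isomorphism and commuting "take $Z_+^*$-invariants" with contragredient duality — are formal, using the freeness of $Z_-$ on both $\M{\lam}$ and $\Wak{\lam}$ exactly as in the proof of Proposition \ref{Pro cocyclisity}(1).
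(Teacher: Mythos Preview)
Your proposal has a genuine gap: you are aiming at the wrong isomorphism. In the second paragraph you set out to prove $\M{\lam}\cong\Wak{\lam}$ (equivalently $\Wak{\lam}^*\cong\M{\lam}^*$) at the critical level, and then pass to $Z$-quotients/invariants. Even if that worked, it would yield $\Wak{\lam}^{\res}\cong\M{\lam}^{\res}$, not $\Wak{\lam}^{\res}\cong(\M{\lam}^{\res})^*$ as the Proposition asserts; restricted Verma modules are not self-dual in general, so these are different statements. Your first and last paragraphs correctly recall that at generic level one has $\M{\lam}^*\cong\Wak{\lam}$ (dual Verma, not Verma), but the concrete argument you give in the middle does not track this duality.

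There is also a gap in the argument for the full-module isomorphism itself. The existence of a nonzero map $\M{\lam}\to\Wak{\lam}$ together with equality of characters does not force an isomorphism; you need either injectivity or surjectivity, and your ``cofreeness over $\finn_-[t]t$'' claim for $\M{\lam}$ is not justified (Verma modules are free over $\affn_-$, not cofree over pieces of $\affn_+$). The paper's proof supplies exactly the missing structural input, but on the restricted side and in the dual direction: it invokes the cocyclicity of $\Wak{\lam}^{\res}$ (from \cite[6.2.2]{AraCheMal08}), dualizes to get that $(\Wak{\lam}^{\res})^*$ is cyclic, obtains a surjection $\M{\lam}\twoheadrightarrow(\Wak{\lam}^{\res})^*$ from the universal property, factors it through $\M{\lam}^{\res}$ since $Z_-$ acts trivially on the target, and then compares characters. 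This is both shorter and avoids any claim about the non-restricted modules at the critical level.
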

\begin{proof}
By \cite[6.2.2]{AraCheMal08},
$\Wak{\lam}^{\res}$ is  cocyclic with the cocyclic vector $|\lam\ket$,
where 
$|\lam\ket$ is the highest weight vector.
Hence its contragredient   dual
$(\Wak{\lam}^{\res})^*$
is cyclic, and the natural 
$\affg$-module homomorphism
$\M{\lam}\ra (\Wak{\lam}^{\res})^*$
 is surjective.
Because $Z_-$ acts trivially on
$(\Wak{\lam}^{\res})^*$, this factors though the surjective homomorphism
$\M{\lam}^{\res}\ra (\Wak{\lam}^{\res})^*$.
Since 
$(\Wak{\lam}^{\res})^*$ and
$\M{\lam}^{\res}$ are the same
character,
 it must be an isomorphism.
By  duality, this proves the assertion.
\end{proof}

\begin{proof}[Proof of Theorem \ref{Th: main2}]
We have already shown
the assertion on the associated graded vertex Poisson algebras in
Proposition 
\ref{Pro:conforma-weight-filtartion}.
It remains to prove the simplicity.

First, let
$\chi=\chi_0$.
By  
Propositions \ref{Pro:injective restricted}
and
\ref{Pro:Wakimoto is Verma},
$\resW$
 is a submodule of
$\BRS{0}{(\M{\cri\Lam_0}^{\res})^*}$.
On the other hand,
$\BRS{0}{(\M{\cri\Lam_0}^{\res})^*}$
is
 cocyclic
by Proposition \ref{Pro cocyclisity},
and the image of 
the vacuum vector $\1$ of $\resW$
equals to
the  cocyclic vector
of $\BRS{0}{(\M{\cri\Lam_0}^{\res})^*}$
up to nonzero constant multiplication.
Hence $\resW$ is also cocyclic, with the cocyclic vector $\1$.
  Therefore   $\resW$ must be  simple.

Next, let $\chi$ be arbitrary.
Let $\{E_p \criW{\chi}\}$ be the conformal filtration
of $\criW{\chi}$
as in the proof of Proposition \ref{Pro:conforma-weight-filtartion}.
Then \eqref{eq:conformal-filtration--}
shows that
$\gr^E \criW{\chi}\cong \resW$ as $\Wg{\cri}$-modules,
which is simple.
Therefore $\criW{\chi}$ is also simple.
This completes the proof.
\end{proof}

\begin{proof}[Proof of Theorem \ref{MainTh}]
The first  assertion follows immediately from 
Proposition \ref{Pro:injectivity}
and Theorem \ref{Th: main2}.
The freeness assertion 
has been  proved in Proposition \ref{Pro:description-of-restricted-W}.
 \end{proof}
\bibliographystyle{jalpha}
\bibliography{math}

\begin{thebibliography}{{Ara}10}

\bibitem[ACM]{AraCheMal08}
T.~Arakawa, D.~Chebotarov, and F.~Malikov.
\newblock Algebras of twisted chira differentail operators and affine
  lolalization of $\mathfrak{g}$-modules.
\newblock {\em Sel.\ Math.\ New Ser.}, Vol.~17, No.~1, pp. 1--46, 2011.

\bibitem[AF]{AraFie08}
Tomoyuki Arakawa and Peter Fiebig.
\newblock On the restricted {V}erma modules at the critical level.
\newblock {\em to appear in Trans. Amer. Math. Soc.}, 2008.
\newblock arXiv:0812.3334v1[math.RT].

\bibitem[AG]{ArkGai02}
S.~Arkhipov and D.~Gaitsgory.
\newblock Differential operators on the loop group via chiral algebras.
\newblock {\em Int. Math. Res. Not.}, No.~4, pp. 165--210, 2002.


\bibitem[A1]{Ara04}
T. Arakawa.
\newblock Vanishing of cohomology associated to quantized {D}rinfeld-{S}okolov
  reduction.
\newblock {\em Int. Math. Res. Not.}, No.~15, pp. 730--767, 2004.

\bibitem[A2]{Ara05}
T. Arakawa.
\newblock Representation theory of superconformal algebras and the
  {K}ac-{R}oan-{W}akimoto conjecture.
\newblock {\em Duke Math. J.}, Vol. 130, No.~3, pp. 435--478, 2005.

\bibitem[A3]{Ara07}
T. Arakawa.
\newblock Representation theory of {$W$}-algebras.
\newblock {\em Invent. Math.}, Vol. 169, No.~2, pp. 219--320, 2007.

\bibitem[A4]{Ara08-a}
T~Arakawa.
\newblock Representation theory of ${W}$-algebras, {II}.
\newblock {\em Adv. Stud. Pure Math.}, Vol.~61, pp. 51--90, 2011.

\bibitem[A5]{Ara09a}
T.~{Arakawa}.
\newblock {A remark on the $C_2$-cofiniteness condition on vertex algebras}.
\newblock {\em Math.\ Z. (online first)}, 2010.
\newblock arXiv:1004.1492[math.QA].

\bibitem[A6]{Ara09b}
T.~{Arakawa}.
\newblock {Associated varieties of modules over Kac-Moody algebras and
  $C_2$-cofiniteness of $W$-algebras}.
\newblock {\em preprint}.
\newblock arXiv:1004.1554[math.QA].

\bibitem[BD1]{BeiDri}
Alexander Beilinson and Vladimir Drinfeld.
\newblock Quantization of hitchin's integrable system and hecke eigensheaves.
\newblock {\em preprint, available at http://www.math.uchicago.edu/~benzvi}.

\bibitem[BD2]{BeiDri04}
Alexander Beilinson and Vladimir Drinfeld.
\newblock {\em Chiral algebras}, Vol.~51 of {\em American Mathematical Society
  Colloquium Publications}.
\newblock American Mathematical Society, Providence, RI, 2004.

\bibitem[DSK]{De-Kac06}
Alberto De~Sole and Victor~G. Kac.
\newblock Finite vs affine {$W$}-algebras.
\newblock {\em Japan. J. Math.}, Vol.~1, No.~1, pp. 137--261, 2006.

\bibitem[EF]{EisFre01}
David Eisenbud and Edward Frenkel.
\newblock Appendix to \cite{Mus01}.
\newblock 2001.

\bibitem[FBZ]{FreBen04}
Edward Frenkel and David Ben-Zvi.
\newblock {\em Vertex algebras and algebraic curves}, Vol.~88 of {\em
  Mathematical Surveys and Monographs}.
\newblock American Mathematical Society, Providence, RI, second edition, 2004.

\bibitem[FF1]{FeuFre90}
Boris~L. Fe{\u\i}gin and Edward~V. Frenkel.
\newblock Affine {K}ac-{M}oody algebras and semi-infinite flag manifolds.
\newblock {\em Comm. Math. Phys.}, Vol. 128, No.~1, pp. 161--189, 1990.



\bibitem[FF2]{FeiFre90}
Boris~L. Feigin and Edward~V. Frenkel.
\newblock Representations of affine {K}ac-{M}oody algebras and bosonization.
\newblock In {\em Physics and mathematics of strings}, pp. 271--316. World Sci.
  Publ., Teaneck, NJ, 1990.

\bibitem[FF3]{FF90}
Boris Feigin and Edward Frenkel.
\newblock Quantization of the {D}rinfel\cprime d-{S}okolov reduction.
\newblock {\em Phys. Lett. B}, Vol. 246, No. 1-2, pp. 75--81, 1990.

\bibitem[FF4]{FeiFre92}
Boris Feigin and Edward Frenkel.
\newblock Affine {K}ac-{M}oody algebras at the critical level and {G}el\cprime
  fand-{D}iki\u\i\ algebras.
\newblock In {\em Infinite analysis, Part A, B (Kyoto, 1991)}, Vol.~16 of {\em
  Adv. Ser. Math. Phys.}, pp. 197--215. World Sci. Publ., River Edge, NJ, 1992.

\bibitem[FG]{FreGai04}
Edward Frenkel and Dennis Gaitsgory.
\newblock {$D$}-modules on the affine {G}rassmannian and representations of
  affine {K}ac-{M}oody algebras.
\newblock {\em Duke Math. J.}, Vol. 125, No.~2, pp. 279--327, 2004.

\bibitem[F1]{Fre05}
Edward Frenkel.
\newblock Wakimoto modules, opers and the center at the critical level.
\newblock {\em Adv. Math.}, Vol. 195, No.~2, pp. 297--404, 2005.

\bibitem[F2]{Fre07}
Edward Frenkel.
\newblock {\em Langlands correspondence for loop groups}, Vol. 103 of {\em
  Cambridge Studies in Advanced Mathematics}.
\newblock Cambridge University Press, Cambridge, 2007.

\bibitem[GG]{GanGin02}
Wee~Liang Gan and Victor Ginzburg.
\newblock Quantization of {S}lodowy slices.
\newblock {\em Int. Math. Res. Not.}, No.~5, pp. 243--255, 2002.

\bibitem[Gin]{Gin08}
Victor Ginzburg.
\newblock Harish-{C}handra bimodules for quantized {S}lodowy slices.
\newblock {\em Represent. Theory}, Vol.~13, pp. 236--271, 2009.

\bibitem[GMS1]{GorMalSch01}
Vassily Gorbounov, Fyodor Malikov, and Vadim Schechtman.
\newblock On chiral differential operators over homogeneous spaces.
\newblock {\em Int. J. Math. Math. Sci.}, Vol.~26, No.~2, pp. 83--106, 2001.

\bibitem[GMS2]{GorMalSch04}
Vassily Gorbounov, Fyodor Malikov, and Vadim Schechtman.
\newblock Gerbes of chiral differential operators. {II}. {V}ertex algebroids.
\newblock {\em Invent. Math.}, Vol. 155, No.~3, pp. 605--680, 2004.

\bibitem[Kac]{Kac98}
Victor Kac.
\newblock {\em Vertex algebras for beginners}, Vol.~10 of {\em University
  Lecture Series}.
\newblock American Mathematical Society, Providence, RI, second edition, 1998.

\bibitem[Kos]{Kos78}
Bertram Kostant.
\newblock On {W}hittaker vectors and representation theory.
\newblock {\em Invent. Math.}, Vol.~48, No.~2, pp. 101--184, 1978.

\bibitem[KRW]{KacRoaWak03}
Victor Kac, Shi-Shyr Roan, and Minoru Wakimoto.
\newblock Quantum reduction for affine superalgebras.
\newblock {\em Comm. Math. Phys.}, Vol. 241, No. 2-3, pp. 307--342, 2003.

\bibitem[KW]{KacWak04}
Victor~G. Kac and Minoru Wakimoto.
\newblock Quantum reduction and representation theory of superconformal
  algebras.
\newblock {\em Adv. Math.}, Vol. 185, No.~2, pp. 400--458, 2004.

\bibitem[Li]{Li05}
Haisheng Li.
\newblock Abelianizing vertex algebras.
\newblock {\em Comm. Math. Phys.}, Vol. 259, No.~2, pp. 391--411, 2005.

\bibitem[Lus]{Lus80}
George Lusztig.
\newblock Hecke algebras and {J}antzen's generic decomposition patterns.
\newblock {\em Adv. in Math.}, Vol.~37, No.~2, pp. 121--164, 1980.

\bibitem[MSV]{MalSchVai99}
Fyodor Malikov, Vadim Schechtman, and Arkady Vaintrob.
\newblock Chiral de {R}ham complex.
\newblock {\em Comm. Math. Phys.}, Vol. 204, No.~2, pp. 439--473, 1999.

\bibitem[Mus]{Mus01}
Mircea Musta{\c{t}}{\u{a}}.
\newblock Jet schemes of locally complete intersection canonical singularities.
\newblock {\em Invent. Math.}, Vol. 145, No.~3, pp. 397--424, 2001.
\newblock With an appendix by David Eisenbud and Edward Frenkel.

\bibitem[P1]{Pre02}
Alexander Premet.
\newblock Special transverse slices and their enveloping algebras.
\newblock {\em Adv. Math.}, Vol. 170, No.~1, pp. 1--55, 2002.
\newblock With an appendix by Serge Skryabin.

\bibitem[P2]{Pre07}
Alexander Premet.
\newblock Enveloping algebras of {S}lodowy slices and the {J}oseph ideal.
\newblock {\em J. Eur. Math. Soc.}, Vol.~9, No.~3, pp. 487--543, 2007.

\bibitem[Vor]{Vor93}
Alexander~A. Voronov.
\newblock Semi-infinite homological algebra.
\newblock {\em Invent. Math.}, Vol. 113, No.~1, pp. 103--146, 1993.

\bibitem[Zhu]{Zhu96}
Yongchang Zhu.
\newblock Modular invariance of characters of vertex operator algebras.
\newblock {\em J. Amer. Math. Soc.}, Vol.~9, No.~1, pp. 237--302, 1996.

\end{thebibliography}

\end{document}